\newtheorem{theorem}{Theorem}[section]
\newtheorem{lemma}[theorem]{Lemma}
\newtheorem{corollary}[theorem]{Corollary}
\theoremstyle{definition}
\newtheorem{remark}[theorem]{Remark}
\newtheorem{question}[theorem]{Question}
\numberwithin{equation}{section}
\newcommand\V{{\mathcal V}}
\newcommand\A{{\mathcal A}}
\newcommand\U{{\mathcal U}}
\newcommand\C{\mathbb{C}}
\begin{document}

\title[The Waring Problem for Matrix Algebras]{The Waring Problem for Matrix Algebras}


\author{Matej Bre\v sar} 
\address{Faculty of Mathematics and Physics,  University of Ljubljana,  and Faculty of Natural Sciences and Mathematics, University
of Maribor, Slovenia}
\email{matej.bresar@fmf.uni-lj.si}
\author{Peter \v Semrl}
\address{Faculty of Mathematics and Physics,  University of Ljubljana, Slovenia}
\email{peter.semrl@fmf.uni-lj.si}
\thanks{Supported by ARRS Grants P1-0288 and J1-2454.}

\keywords{Waring problem, noncommutatative polynomial, matrix algebra, L'vov-Kaplansky Conjecture}

\subjclass[2020]{16R10, 16S50, 46L05}

\begin{abstract}  If a noncommutative polynomial $f$ is  neither an identity nor a central polynomial of $\A=M_n(\C)$, then every trace zero matrix in $\A$ can be written as a sum of two matrices from $f(\A)-f(\A)$. Moreover, ``two'' cannot be replaced by ``one''.
\end{abstract} 
\maketitle

\newcommand\E{\ell}
\newcommand\mathcalM{{\mathcal M}}
\newcommand\pc{\mathfrak{c}}

\newcommand{\enp}{\begin{flushright} $\Box$ \end{flushright}}

\section{Introduction}

The classical Waring's problem, proposed by Edward  Waring in 1770 and solved by David Hilbert in 1909, asks whether for 
every positive integer $k$ there exists a positive integer $g(k)$ such that every positive integer
can  be expressed as a sum of  $g(k)$ $k$th powers of nonnegative integers.
Various extensions and variations of this problem have been studied by  different groups of mathematicians. One of them is the Waring problem for
finite simple groups, resolved ten years ago by  Larsen, Shalev, and Tiep 
\cite{LST}. They proved that  given a word  $w=w(x_1,\dots,x_m) \ne 1$
and denoting by 
$w(G)$, where $G$ is a group,
  the image of the word map from $G^m$ to $G$
 induced by $w$, 
  $w(G)^2 = G$  for 
every  finite non-abelian simple group $G$ of sufficiently high order.   It is natural to seek analogues of this results for finite-dimensional simple algebras,
with (noncommutative) polynomials  (i.e., elements of free algebras) playing the role of
words (i.e., elements of free groups).

Let us introduce the appropriate setting for our work. Let $F$ be a field,
let $F\langle \mathcal X\rangle$ be the free algebra generated by the set $\mathcal X = \{X_1,X_2,\dots\}$,
 let
$f=f(X_1,\dots,X_m)\in F\langle \mathcal X  \rangle$ be a  polynomial, and let $\A$ be an $F$-algebra. The set
$$f(\A)=\{f(a_1,\dots,a_m)\,|\,a_1,\dots,a_m\in \A\}$$ is called
 the {\em image of $f$} (in $\A$). If $f(\A)=\{0\}$, then we call $f$ a {\em (polynomial) identity} of $\A$. If $f(\A)$ is contained in the center of $\A$ but is not an identity of $\A$,
then  we call $f$  a {\em central polynomial} for $\A$. Further, $f$ is {\em $k$-central} for $\A$, where $k\ge 2$, if $f^k$ is central for $\A$, but $f,\dots,f^{k-1}$ are not.

In the recent paper \cite{B}, the first author proposed the study   
 of the Waring problem for algebras of matrices (not only  over fields, but also over general algebras). One of the  main results, \cite[Theorem 3.18]{B}, states that if
$\A=M_n(F)$, where $n\ge 2$ and 
$F$ is an algebraically closed  field with characteristic $0$,  and $f\in  F\langle \mathcal X\rangle$ is a   polynomial  which is neither an identity nor a central polynomial of $\A$, then
 $$f(\A)-f(\A) = \{s-t\,|\,s,t\in f(\A)\}$$ contains all square-zero matrices in $\A$. Using the fact that every trace zero matrix is a sum of 
four square-zero matrices \cite{dSP}, it follows that every trace zero matrix in $\A$ is  
a sum of four matrices from $f(\A)-f(\A)$ \cite[Corollary 3.19]{B}.  The obvious question that was left open is what is the least number than can replace four. The goal of this paper is to show that, at least  when $F=\C$, the answer is two. More precisely,  the following are our main theorems.

\begin{theorem}\label{main}
Let
$\A=M_n(\C)$, $n\ge 2$, and let 
 $f\in  \C\langle \mathcal X\rangle$ be a polynomial which  is neither an identity nor a central polynomial of $\A$. Then every trace zero matrix in $\A$ can be written as 
a sum of two matrices from $f(\A)-f(\A)$. 
\end{theorem}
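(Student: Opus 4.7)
Our starting point is \cite[Theorem 3.18]{B}: the set $D := f(\A) - f(\A)$ contains every square-zero matrix in $\A = M_n(\C)$. Since $D = -D$, the theorem reduces to showing that $D + D$ contains every trace-zero matrix. The most direct attempt---writing every trace-zero matrix as a sum of two square-zero matrices---fails whenever $n \ge 3$ is odd, because a square-zero matrix in $M_n(\C)$ has rank at most $(n-1)/2$, so the sum of two cannot reach an invertible trace-zero matrix. Hence $D$ must be enlarged beyond the square-zero class.

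The natural key Lemma is that $D$ contains every nilpotent matrix; granting this, the theorem follows at once from the classical fact that every trace-zero matrix in $M_n(\C)$ is the sum of two nilpotent matrices. Equivalently---since each nilpotent matrix in $M_n(\C)$ decomposes as a sum of two square-zero matrices (split the superdiagonal entries of each Jordan block into odd-indexed and even-indexed positions)---the Lemma may be rephrased as: the sum of any two square-zero matrices belongs to $D$. Under this equivalent form the theorem also follows by combining with \cite{dSP}, writing a trace-zero $T$ as a sum $S_1 + S_2 + S_3 + S_4$ of four square-zero matrices and regrouping $T = (S_1+S_2) + (S_3+S_4) \in D + D$.

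The bulk of the work therefore lies in the Lemma, which I would attack by refining the substitution argument underlying \cite[Theorem 3.18]{B}. That argument, given a single square-zero $S$, constructs $m$-tuples $(a_i), (b_i)$ such that $f(a_1,\dots,a_m) - f(b_1,\dots,b_m) = S$, using crucially that $f$ is neither an identity nor central. To extend this to a pair $(S_1, S_2)$ of square-zero matrices, my plan is to first conjugate the pair over $\C$ into a joint normal form---exploiting the flexibility offered by algebraic closure---and then build combined substitutions whose $f$-difference is $S_1 + S_2$. The main obstacle is precisely this step of promoting the single-square-zero construction to a simultaneous two-square-zero construction; once it is in place, the combinatorial regrouping and the final conclusion are immediate.
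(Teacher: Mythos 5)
Your proposal does reduce the theorem to a clean-looking lemma, and the surrounding combinatorics (nilpotent $=$ sum of two square-zeros, trace-zero $=$ sum of two nilpotents, or the regrouping of de Seguins Pazzis's four-term decomposition) are all correct. But the reduction is where the proof ends, not where it begins: the key lemma --- that $f(\A)-f(\A)$ contains every nilpotent, equivalently every sum of two square-zero matrices --- is not proved. What you offer instead is a plan of attack (``conjugate the pair into a joint normal form \dots build combined substitutions''), and you explicitly acknowledge that ``the main obstacle is precisely this step.'' Proving a single square-zero matrix lies in $f(\A)-f(\A)$, as in \cite[Theorem 3.18]{B}, already exploits the one-parameter flexibility of conjugating a single Jordan chain; simultaneously solving $f(\underline a)-f(\underline b)=S_1+S_2$ for an arbitrary pair of square-zero operators is a qualitatively harder constraint, and nothing in the proposal indicates how the constraint ``$S_2$ must also be produced'' would be absorbed into the substitution. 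So there is a genuine gap, and the lemma you would need is not established by the argument you cite nor by any variant sketched.

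It is also worth noting that the paper's proof goes through an entirely different door. Rather than strengthening \cite[Theorem 3.18]{B}, it uses \cite[Theorem 3.8]{B}, which guarantees a matrix $B\in f(\A)$ whose eigenvalues all have algebraic multiplicity at most $n/2$. It then proves (Theorem~\ref{taglavn}) that for any such $B$, every trace-zero matrix is a sum of two differences $B'-B''$ of matrices similar to $B$, and concludes via Remark~\ref{conj}. The heart of that result is not a substitution argument in the free algebra at all, but a linear-algebraic one: every trace-zero matrix is conjugate to a zero-diagonal matrix; the zero-diagonal space $M_n^0(\C)$ is covered by $\V+U\V U^*$ for a carefully chosen unitary $U$ and an off-diagonal-block space $\V$ (Lemmas~\ref{jan}--\ref{nova2}, via commutants of pairs of idempotents and the trace inner product on $M_n(\C)$); and each element of $\V$ is a difference of two block-triangular matrices similar to $B$ by the Sylvester--Rosenblum theorem (Lemma~\ref{vohka}). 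If you wish to pursue your own route, the work lies entirely in your key lemma, and you should not expect it to follow from a modest refinement of the existing substitution argument.
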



\begin{theorem}\label{main2}
Let $F$ be a field of characteristic $0$ and let $\A= M_n(F)$ with  $n > 2$.
If a polynomial $f$ is $2$-central for  $\A$, then 
the set
 of all linear combinations of two matrices from $f(\A)$ does not contain all trace zero matrices.
In particular, not every trace zero matrix lies in $f(\A)-f(\A)$.
\end{theorem}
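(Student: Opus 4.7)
The plan is to reduce the question to a concrete problem about sums of two matrices whose squares are scalar. Since $f$ is $2$-central, every $a \in f(\A)$ satisfies $a^2 \in F I$, and this property is preserved under scalar multiplication. Hence every linear combination $\alpha a + \beta b$ with $a, b \in f(\A)$ and $\alpha, \beta \in F$ has the form $c_1 + c_2$ where $c_1, c_2 \in M_n(F)$ satisfy $c_i^2 \in F I$. It therefore suffices to exhibit a trace-zero matrix $M \in M_n(F)$ that admits no such decomposition.

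My candidate is $M = \diag(\mu_1, \ldots, \mu_n)$ with $\mu_i$ distinct, all nonzero, satisfying $\mu_1 + \cdots + \mu_n = 0$ and $\mu_i + \mu_j \neq 0$ for every $i \neq j$. For any $n \geq 3$ and any field of characteristic $0$, such a choice exists; for instance $(1, 2, \ldots, n-1, -n(n-1)/2)$ works, since the positive entries cannot cancel in pairs, and $n(n-1)/2$ strictly exceeds $n-1$ for $n \geq 3$.

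Assuming $M = c_1 + c_2$ with $c_i^2 = \lambda_i I$, I would substitute $c_1 = M - c_2$ into $c_1^2 = \lambda_1 I$ to obtain the Sylvester-type equation
\[
M c_2 + c_2 M = M^2 + (\lambda_2 - \lambda_1) I.
\]
Read off in the eigenbasis of $M$, this becomes $(\mu_i + \mu_j)(c_2)_{ij} = (\mu_i^2 + \lambda_2 - \lambda_1)\delta_{ij}$; the hypothesis $\mu_i + \mu_j \neq 0$ for all $i,j$ forces $c_2$ to be diagonal with $(c_2)_{ii} = (\mu_i^2 + \lambda_2 - \lambda_1)/(2\mu_i)$. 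Imposing $(c_2)_{ii}^2 = \lambda_2$ then gives, for each $i$,
\[
\mu_i^4 - 2(\lambda_1 + \lambda_2)\mu_i^2 + (\lambda_1 - \lambda_2)^2 = 0,
\]
so every $\mu_i$ lies in the four-element set $\{\pm(\sqrt{\lambda_1} \pm \sqrt{\lambda_2})\}$ (computed in an algebraic closure of $F$).

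The punchline is that this set is invariant under $x \mapsto -x$, so any three of its elements contain a pair summing to zero, contradicting the construction of $M$. Degenerate cases ($\lambda_1 \lambda_2 = 0$ or $\lambda_1 = \lambda_2$) only shrink this set to at most two nonzero values, which is even more incompatible with having $n \geq 3$ distinct nonzero $\mu_i$. I expect the main obstacle to be mostly bookkeeping: verifying the Sylvester reduction carefully and confirming that the choice of $M$ rules out the singular situation in which the equation fails to determine $c_2$ uniquely---which is precisely the role of the nonzero-pair-sum condition. The combinatorial pairing step at the end is then immediate.
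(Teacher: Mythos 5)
Your argument is correct and reaches the conclusion by a genuinely different route from the paper's. Both proofs begin identically: a linear combination $\alpha a+\beta b$ with $a,b\in f(\A)$ can be rewritten as $c_1+c_2$ with $c_i^2\in FI$, and both choose as the obstruction a trace-zero diagonal matrix with distinct entries whose pairwise sums are nonzero. From there you proceed computationally: substituting $c_1=M-c_2$ into $c_1^2=\lambda_1 I$ yields the Sylvester equation $Mc_2+c_2M=M^2+(\lambda_2-\lambda_1)I$, which your hypotheses on $M$ solve uniquely for a diagonal $c_2$; imposing $c_2^2=\lambda_2 I$ then forces each $\mu_i$ to satisfy a fixed quartic $\mu^4-2(\lambda_1+\lambda_2)\mu^2+(\lambda_1-\lambda_2)^2=0$ whose root set is closed under negation, and picking $n\geq 3$ distinct nonzero roots from such a set necessarily produces a pair summing to zero, contradicting the choice of $M$. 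The paper instead packages the heart of the argument into a structural lemma about central simple algebras: if $d=u+v$ with $u^2,v^2\in F1$ and $C_\A(d^2)$ is commutative, then (since $u,v$ commute with $uv+vu$ and $C_\A(uv+vu)=C_\A(d^2)$) one deduces $uv=vu$, hence $w=uv+vu$ has scalar square and commutative centralizer, which after scalar extension to $\bar F$ forces $n\leq 2$. Your route is more elementary and entirely self-contained within $M_n(F)$ plus a finite field extension; the paper's route is more conceptual, avoids solving equations explicitly, and yields a statement valid for arbitrary central simple algebras (with Remark~\ref{rfd} even removing the finite-dimensionality hypothesis), which the authors note is of independent interest. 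Both are complete.
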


It should be added here that  $2$-central polynomials for 
$\A=M_n(F)$ indeed exist for some even $n$ \cite[Proposition 2]{BMR}.


Taking a polynomial such as $f=[X_1,X_2]$ we see that the
 restriction to trace zero matrices is indeed 
necessary.
A polynomial  like $f=[X_1,X_2] + \frac{1}{n}$ has only matrices of trace $1$ in its image, which explains why the involvement of the set $f(\A)-f(\A)$ is natural.

The proof of Theorem \ref{main} is based on two results, one known and one new. The first one is \cite[Theorem 3.8]{B} which states that $f(\A)$ contains a matrix $B$ all of whose eigenvalues have  algebraic multiplicity   at most $\frac{n}{2}$. The second one is Theorem \ref{taglavn} which states that if $B\in \A$ is any such matrix, 
then every trace zero matrix can be written as a sum of two matrices of the form $B'-B''$ where $B'$ and $B''$ are similar to $B$.
Section \ref{sp}, which occupies the larger part of the paper, is primarily devoted to proving this theorem.


Theorem \ref{main2} will be proved in Section \ref{lg}.

Theorems \ref{main} and \ref{main2} give a definitive solution of one basic Waring type problem for algebras.  
 We will discuss other problems and establish  some new results in Section \ref{sr}. In particular,
we will show that if $f$ and $\A$ are as in Theorem \ref{main}, then $f(\A)-f(\A)$ already contains all trace zero matrices in $\A$ provided that either $n$ is prime or $f$ is multilinear (Theorem \ref{mp}).  The latter is of interest in light of the L'vov-Kaplansky conjecture. We will also consider
 the Waring problem for 
the algebra $\mathcal B(H)$ of all bounded linear operators on a Hilbert space $H$ (Theorem \ref{bh}).

\section{Proof of Theorem \ref{main}} \label{sp} 

Our goal is to prove Theorem \ref{taglavn}, from which Theorem \ref{main} will  quickly follow. 
To this end, we need a series of lemmas. 

We start with a simple lemma which will not  be needed until the proof of  Theorem \ref{taglavn},  
but may help the reader to understand the motivation for the lemmas that follow.

\begin{lemma}\label{brsvt}
If each eigenvalue of the matrix
 $B \in M_n (\mathbb{C})$  has algebraic multiplicity $\le \frac{n}{2}$, then either
\begin{enumerate}
\item[{\rm (a)}] 
 $n$ is even and $B$ is similar to a block diagonal matrix
$$
\left[ \begin{matrix} B_1 & 0 \cr 0 & B_2 \cr \end{matrix} \right],
$$
where $B_1$ and $B_2$ are $\frac{n}{2} \times \frac{n}{2}$ matrices with disjoint spectra, or
\item[{\rm (b)}] $B$ is similar to a block diagonal matrix
$$
B = \left[ \begin{matrix} B_1 & 0 &0 \cr 0 & B_2 & 0\cr 0 & 0 & B_3 \end{matrix} \right],
$$
where the diagonal blocks are of  sizes $p \times p$, $q\times q$, and $r\times r$ with
$$
p,q,r < {n \over 2}
$$
and the spectra of $B_1$, $B_2$, and $B_3$ are pairwise disjoint.
\end{enumerate}
\end{lemma}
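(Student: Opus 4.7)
The plan is to first put $B$ into its primary decomposition: by the generalized eigenspace decomposition, $B$ is similar to a block-diagonal matrix $\diag(C_1,\dots,C_k)$ where $C_i$ has only the eigenvalue $\lambda_i$, the $\lambda_i$ are pairwise distinct, and $C_i$ has size $m_i\times m_i$ with $m_i$ the algebraic multiplicity of $\lambda_i$. The hypothesis becomes $m_i\le n/2$ for every $i$, together with $\sum_i m_i=n$, and the task reduces to a purely combinatorial one: partition $\{1,\dots,k\}$ into either two subsets each of weight (in the $m_i$) equal to $n/2$, which gives case (a) and forces $n$ even, or into three nonempty subsets each of weight strictly less than $n/2$, which gives case (b). The spectra are automatically disjoint once we merge distinct eigenvalue blocks into separate groups.

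For the combinatorial step I would split on whether some subset $S\subseteq\{1,\dots,k\}$ satisfies $\sum_{i\in S} m_i=n/2$. If such a subset exists, then $n$ is even, and grouping the blocks indexed by $S$ versus those indexed by its complement yields two $\frac{n}{2}\times\frac{n}{2}$ diagonal blocks with disjoint spectra, which is case (a). Otherwise no subset sums to $n/2$; in particular no single $m_i$ equals $n/2$, so $m_i<n/2$ for every $i$. In this branch I would use a greedy sweep: let $j$ be the largest integer with $m_1+\cdots+m_j<n/2$ (with the empty sum read as $0$), and take the three groups to be $\{1,\dots,j\}$, $\{j+1\}$, and $\{j+2,\dots,k\}$. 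By the choice of $j$ the first group has weight $<n/2$; since no partial sum equals $n/2$, one has $m_1+\cdots+m_{j+1}>n/2$, so the third group has weight $n-(m_1+\cdots+m_{j+1})<n/2$; and the singleton has weight $m_{j+1}<n/2$ by the standing assumption of this branch.

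The small technical point to verify is that all three groups are nonempty, i.e.\ $1\le j\le k-2$. The bound $j\ge 1$ follows from $m_1<n/2$; the bound $j\le k-1$ from $\sum m_i=n>n/2$; and $j=k-1$ is excluded because it would force $m_k=n-(m_1+\cdots+m_{k-1})>n/2$, contradicting our assumption. I do not anticipate a real obstacle here---the primary decomposition is standard and the remaining combinatorics is elementary---but one should keep track that the strict inequality $m_i<n/2$ that case (b) requires is exactly what the failure of the ``subset summing to $n/2$'' case supplies, and that the case split cleanly dictates which of (a) or (b) holds (they can of course overlap, but the lemma only asks for one).
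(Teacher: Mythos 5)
Your proof is correct and follows essentially the same route as the paper: reduce to a block-diagonal form (Jordan / primary decomposition with one eigenvalue per block), then run a greedy prefix sweep on the block sizes, landing in case (a) when a sum of $\tfrac{n}{2}$ is achievable and otherwise splitting the indices into a prefix, a singleton, and a suffix for case (b). The only cosmetic difference is your case split (whether \emph{any} subset of the $m_i$ sums to $\tfrac{n}{2}$) versus the paper's (a single block of size $\tfrac{n}{2}$, or the greedy prefix hitting $\tfrac{n}{2}$ exactly), and both handle the endpoint checks $1\le j\le k-2$ the same way.
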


\begin{proof} By the
 Jordan normal form theorem, $B$ is similar to a block diagonal matrix
$$
B = \left[ \begin{matrix} 
C_1 & 0 & 0 & \ldots & 0 \cr
0 & C_2 & 0 & \ldots & 0 \cr
0 & 0 & C_3 & \ldots & 0 \cr
\vdots & \vdots & \vdots & \ddots & \vdots  \cr  0 & 0 & 0 & \ldots & C_k  \cr \end{matrix} \right],
$$
where all matrices $C_1 , \ldots, C_k$ have only one eigenvalue and these eigenvalues are pairwise different.
If $n$ is even  and one of the diagonal blocks is of  size $\frac{n}{2} \times \frac{n}{2}$, then (a) holds. Assume, therefore, that this is not the case. 
We take the maximal integer $j$ such that the size of the matrix
$$
B_1 = \left[ \begin{matrix} 
C_1  & \ldots & 0 \cr
\vdots &  \ddots & \vdots  \cr   0 & \ldots & C_j  \cr \end{matrix} \right]
$$
is at most $\frac{n}{2} \times \frac{n}{2}$. If the size of $B_1$ is $\frac{n}{2} \times \frac{n}{2}$, then by setting
$$
B_2 = \left[ \begin{matrix} 
C_{j+1}  & \ldots & 0 \cr
\vdots &  \ddots & \vdots  \cr   0 & \ldots & C_k  \cr \end{matrix} \right]
$$
we again arrive at (a). Otherwise we take $B_2 = C_{j+1}$ and 
$$
B_3 = \left[ \begin{matrix} 
C_{j+2}  & \ldots & 0 \cr
\vdots &  \ddots & \vdots  \cr   0 & \ldots & C_k  \cr \end{matrix} \right],
$$
so that (b) holds.
\end{proof}

We continue by introducing some notation.
Let $q$ be a real number, $0 < q < 1$. By  $P_2$, $R_q$, and  $R_{q}^-$ we denote  the $2\times 2$ rank one idempotent matrices  
$$
P_2 = \left[ \begin{matrix} 1 & 0 \cr 0 & 0 \cr \end{matrix} \right],  $$
$$ R_q = \left[ \begin{matrix} q & \sqrt{q(1-q)} \cr \sqrt{q(1-q)} & 1-q \cr \end{matrix} \right],$$
and
$$R_{q}^- = \left[ \begin{matrix} q & -\sqrt{q(1-q)} \cr -\sqrt{q(1-q)} & 1-q \cr \end{matrix} \right].
$$
We will need the following simple observations. A $2\times 2$ matrix $A$ commutes with $P_2$ if and only if $A$ is diagonal.  Let $p,q \in (0,1)$, $p\not= q$. If a $2 \times 2$ diagonal matrix $D$ satisfies one of the following four conditions $$R_q D = D R_p,\quad  R_{q}^- D = D R_p,\quad R_q D = D R_{p}^-,\quad R_q^- D = D R_{p}^-,$$
 then $D=0$, and if it satisfies 
$$R_q D = D R_q\quad\mbox{or}\quad R_q^- D = D R_q^-,$$ then $D$ is a scalar multiple of the $2 \times 2$ identity matrix.

The next technical lemma will be needed as a tool for proving Lemmas \ref{jan} and \ref{jan2}.

\begin{lemma}\label{snegec}
Let $n \ge 2k$ be positive integers and let $q_1, \ldots, q_k \in (0,1)$ be pairwise different real numbers. We denote by $P, Q \in M_n (\mathbb{C})$ the block diagonal rank $k$ idempotents
$$
P = \left[ \begin{matrix} 
P_2 & 0 & 0 & \ldots & 0 & 0\cr
0 & P_2 & 0 & \ldots & 0 &0\cr
0 & 0 & P_2 & \ldots & 0 & 0\cr
\vdots & \vdots & \vdots & \ddots & \vdots & \vdots \cr  0 & 0 & 0 & \ldots & P_2 &0 \cr 0 & 0 & 0 & \ldots & 0&0 \cr\end{matrix} \right]\ \ \ {\rm and} \ \ \ 
Q = 
 \left[ \begin{matrix} 
Q_{q_1} & 0 & 0 & \ldots & 0 & 0\cr
0 & Q_{q_2} & 0 & \ldots & 0 & 0\cr
0 & 0 & Q_{q_3} & \ldots & 0 & 0\cr
\vdots & \vdots & \vdots & \ddots & \vdots & \vdots \cr  0 & 0 & 0 & \ldots & Q_{q_k} & 0 \cr 0 & 0 & 0 & \ldots & 0 & 0 \cr\end{matrix} \right],
$$
where $$Q_{q_i}\in \{ R_{q_i},  R_{q_i}^-\},\,\,\, i=1,\dots,k.$$
Assume that $A \in M_n (\mathbb{C})$ commutes with both $P$ and $Q$. Then there exist complex numbers $\lambda_1 , \ldots , \lambda_k$ and an $(n-2k)\times (n-2k)$ matrix $A'$ such that
$$
A = \left[ \begin{matrix} 
\lambda_1 I & 0 & 0 & \ldots & 0 & 0\cr
0 & \lambda_2 I & 0 & \ldots & 0 & 0\cr
0 & 0 & \lambda_3 I & \ldots & 0 & 0\cr
\vdots & \vdots & \vdots & \ddots & \vdots & \vdots \cr  0 & 0 & 0 & \ldots & \lambda_k I & 0 \cr  0 & 0 & 0 & \ldots & 0 & A' \cr \end{matrix} \right].
$$
where $I$ denotes the $2 \times 2$ identity matrix. (If $n=2k$, then the last row and the last column in the above block matrix representations
are of course absent.)
\end{lemma}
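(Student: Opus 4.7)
The plan is to write $A$ in block form with respect to the decomposition $\mathbb{C}^n = \mathbb{C}^2 \oplus \cdots \oplus \mathbb{C}^2 \oplus \mathbb{C}^{n-2k}$ (with $k$ copies of $\mathbb{C}^2$) and exploit the commutation relations $AP = PA$ and $AQ = QA$ block by block. Write $A = (A_{ij})_{1\le i,j \le k+1}$ where $A_{ij}$ is $2\times 2$ for $i,j\le k$, and the last row/column of blocks has appropriate rectangular sizes, with $A_{k+1,k+1}$ being $(n-2k)\times(n-2k)$.

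First I would handle the ``bulk'' indices $1 \le i,j \le k$. Commutation with $P$ yields $A_{ij} P_2 = P_2 A_{ij}$, so by the first observation preceding the lemma each $A_{ij}$ is diagonal. Then commutation with $Q$ yields $A_{ij} Q_{q_j} = Q_{q_i} A_{ij}$. For $i \ne j$ we have $q_i \ne q_j$, so the second observation forces $A_{ij} = 0$; for $i = j$ the same observation forces $A_{ii} = \lambda_i I$ for some $\lambda_i \in \mathbb{C}$. This is the cleanest part of the argument and uses the preparatory remarks directly.

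Next I would handle the boundary blocks $A_{i,k+1}$ for $i \le k$ and $A_{k+1,j}$ for $j \le k$. From $AP = PA$ one gets $P_2 A_{i,k+1} = 0$ and $A_{k+1,j} P_2 = 0$, which says that the columns of $A_{i,k+1}$ lie in $\ker P_2 = \mathbb{C}e_2$ and the rows of $A_{k+1,j}$ lie in the left kernel of $P_2$, namely $\mathbb{C}e_2^{\,T}$. From $AQ = QA$ one analogously gets $Q_{q_i} A_{i,k+1} = 0$ and $A_{k+1,j} Q_{q_j} = 0$, so the columns of $A_{i,k+1}$ lie in $\ker Q_{q_i}$ and the rows of $A_{k+1,j}$ lie in the left kernel of $Q_{q_j}$. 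The key computation is to check that $\ker P_2 \cap \ker Q_{q_i} = \{0\}$: since $q_i \in (0,1)$, the rank-one projection $Q_{q_i}\in \{R_{q_i},R_{q_i}^-\}$ has one-dimensional kernel spanned by $(\mp\sqrt{1-q_i},\sqrt{q_i})^T$, which is never a multiple of $e_2$ because $q_i \ne 0$. An identical argument rules out a nonzero row space on the left. Hence $A_{i,k+1} = 0$ and $A_{k+1,j} = 0$ for every $i,j \le k$.

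Finally, the $(k+1,k+1)$-block is totally free, since $P$ and $Q$ both have $0$ in that block; setting $A' = A_{k+1,k+1}$ completes the description and matches the block form asserted in the lemma. The main obstacle is purely bookkeeping — keeping the block indices straight across the two simultaneous commutation conditions — rather than any conceptual difficulty, and the whole argument is a clean application of the two elementary observations stated just before the lemma.
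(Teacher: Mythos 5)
Your proof is correct and follows essentially the same route as the paper: write $A$ in $2\times 2$ block form, use the two observations preceding the lemma to kill the off-diagonal bulk blocks and make the diagonal blocks scalar, and then argue that the boundary blocks vanish because the kernels of $P_2$ and $Q_{q_i}$ intersect trivially (the paper simply asserts this; you compute explicit kernel vectors). One tiny slip in the explanation: the vector $(\mp\sqrt{1-q_i},\sqrt{q_i})^T$ fails to be a multiple of $e_2$ because $q_i\ne 1$ (so the first coordinate is nonzero), not because $q_i\ne 0$ — but since $q_i\in(0,1)$ both hold and nothing is affected.
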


\begin{proof}
Let $A \in  M_n (\mathbb{C})$. We  write it in  the block matrix form 
$$
A = \left[ \begin{matrix} 
A_{11} & A_{12} & A_{13} & \ldots & A_{1k} & A_{1, k+1}\cr
A_{21} & A_{22} & A_{23} & \ldots & A_{2k} & A_{2, k+1}\cr A_{31} & A_{32} & A_{33} & \ldots & A_{3k} & A_{3, k+1}\cr
\vdots & \vdots & \vdots & \ddots & \vdots & \vdots\cr
A_{k1} & A_{k2} & A_{k3} & \ldots & A_{kk} & A_{k, k+1}\cr A_{k+1,1} & A_{k+1,2} & A_{k+1,3} & \ldots & A_{k+1,k} & A_{k+1, k+1}\cr
    \end{matrix} \right],
$$
where each of the $A_{ij}$'s, $1 \le i,j \le k$, is a $2\times 2$ matrix. If $A$ commutes with both $P$ and $Q$, then
\begin{equation}\label{dostdost}
P_2 A_{ij} = A_{ij} P_2, \ \ \ i,j = 1, \ldots, k,
\end{equation}
\begin{equation}\label{punkut}
Q_{q_i} A_{ij} = A_{ij} Q_{q_j}, \ \ \ i,j = 1, \ldots, k,
\end{equation}
\begin{equation}\label{punk}
P_2 A_{i, k+1} = Q_{q_i} A_{i, k+1} = 0,  \ \ \ i = 1, \ldots, k,
\end{equation}
and
\begin{equation}\label{punk2}
A_{ k+1, j} P_2  = A_{k+1 , j} Q_{q_j} = 0,  \ \ \ j = 1, \ldots, k.
\end{equation}
Applying the simple observations preceding the formulation of the lemma to  (\ref{dostdost}) and  (\ref{punkut}), we see that the $A_{ii}$'s, $i=1, \ldots, k$, are scalar matrices and $A_{ij} = 0$ whenever $1 \le i,j \le k$ and $i\not=j$. If we consider $P_2 , Q_{q_i} : \mathbb{C}^2 \to \mathbb{C}^2$ as linear operators, then the intersection of their kernels is the zero subspace and the sum of their images is the whole space $\mathbb{C}^2$. Thus,  (\ref{punk}) and  (\ref{punk2}) imply that
$
A_{i, k+1} = 0$, $i = 1, \dots, k$, and $A_{ k+1, j} =0$, $j=1,\dots,k$.
\end{proof}

\begin{remark}\label{zgoljdodatnarazlags}
Lemma \ref{snegec} can be stated in different ways. For example, one of them reads as  that if
$$
P = \left[ \begin{matrix} 
P_2 & 0 & 0 & \ldots & 0 & 0\cr
0 & 0 & 0 & \ldots & 0 &0\cr
0 & 0 & P_2 & \ldots & 0 & 0\cr
\vdots & \vdots & \vdots & \ddots & \vdots & \vdots \cr  0 & 0 & 0 & \ldots & P_2 &0 \cr 0 & 0 & 0 & \ldots & 0&0 \cr\end{matrix} \right]\ \ \ {\rm and} \ \ \ 
Q = 
 \left[ \begin{matrix} 
Q_{q_1} & 0 & 0 & \ldots & 0 & 0\cr
0 & 0 & 0 & \ldots & 0 & 0\cr
0 & 0 & Q_{q_3} & \ldots & 0 & 0\cr
\vdots & \vdots & \vdots & \ddots & \vdots & \vdots \cr  0 & 0 & 0 & \ldots & Q_{q_k} & 0 \cr 0 & 0 & 0 & \ldots & 0 & 0 \cr\end{matrix} \right],
$$
where $Q_{q_i}$'s are as above, then every
matrix  $A \in M_n (\mathbb{C})$ that commutes with both $P$ and $Q$
is of the form
$$
A = \left[ \begin{matrix} 
\lambda_1 I & 0 & 0 & \ldots & 0 & 0\cr
0 & * & 0 & \ldots & 0 & *\cr
0 & 0 & \lambda_3 I & \ldots & 0 & 0\cr
\vdots & \vdots & \vdots & \ddots & \vdots & \vdots \cr  0 & 0 & 0 & \ldots & \lambda_k I & 0 \cr  0 & * & 0 & \ldots & 0 & * \cr \end{matrix} \right],
$$
where the $*$'s can be any matrices of  appropriate sizes. 
 Indeed, this follows  by  permuting the basis vectors in Lemma \ref{snegec}. 

We have pointed out just one concrete variation of Lemma \ref{snegec}. In the proof of \ref{jan2} we will also need some other variations which can be verified in the same straightforward manner.  
\end{remark}



\begin{lemma}\label{jan}
Let $n$  be an even positive integer,  and let
$$
R = \left[ \begin{matrix} I_{\frac{n}{2}} & 0 \cr 0 & 0 \cr \end{matrix} \right]  \in M_n (\mathbb{C}),
$$
where $I_{\frac{n}{2}}$ stands for the $\frac{n}{2}\times \frac{n}{2}$ identity matrix. Then there exists a unitary matrix $U \in M_n (\mathbb{C})$ such that every matrix
$A \in M_n (\mathbb{C})$ that commutes with both $R$ and $URU^\ast$ is a diagonal matrix.
\end{lemma}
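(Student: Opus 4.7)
The plan is to reduce to Lemma \ref{snegec} by an appropriate change of basis. Set $k = \frac{n}{2}$ and let $V \in M_n(\C)$ be the permutation matrix defined by $V e_i = e_{2i-1}$ and $V e_{k+i} = e_{2i}$ for $1 \le i \le k$. A direct verification shows that $VRV^* = P$, where $P$ is the block diagonal matrix with $k$ copies of $P_2$ on the diagonal, as considered in Lemma \ref{snegec}.

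Next, I would pick pairwise distinct $q_1, \dots, q_k \in (0,1)$ and let $Q$ be the block diagonal matrix with diagonal blocks $R_{q_1}, R_{q_2}, \dots, R_{q_k}$, as in Lemma \ref{snegec}. Since each $R_{q_i}$ is a rank one orthogonal projection (it is self-adjoint and satisfies $R_{q_i}^2 = R_{q_i}$), both $P$ and $Q$ are rank $k$ orthogonal projections, so there exists a unitary $W \in M_n(\C)$ with $WPW^* = Q$. Concretely, $W$ can be taken block diagonal with $2\times 2$ unitary blocks $W_i$ satisfying $W_i P_2 W_i^* = R_{q_i}$. Set $U = V^* W V$; then $U$ is unitary and
$$
U R U^* = V^* W V R V^* W^* V = V^* W P W^* V = V^* Q V.
$$

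To conclude, suppose $A \in M_n(\C)$ commutes with both $R$ and $URU^* = V^* Q V$. Then $B := V A V^*$ commutes with both $V R V^* = P$ and $V(V^* Q V)V^* = Q$. Lemma \ref{snegec}, applied with $n = 2k$ and with every $Q_{q_i} = R_{q_i}$, then forces $B$ to be block diagonal with $2\times 2$ scalar blocks $\lambda_i I$, which in particular is a diagonal matrix. Since conjugation by the permutation matrix $V$ sends diagonal matrices to diagonal matrices, $A = V^* B V$ is diagonal as well. The entire argument is essentially a bookkeeping chase once Lemma \ref{snegec} is in hand; the only thing one must be careful about is keeping track of the conjugations so that the hypothesis of Lemma \ref{snegec} is correctly matched after passing to $B = VAV^*$.
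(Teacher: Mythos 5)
Your proof is correct and is essentially the paper's own argument: both reduce to Lemma \ref{snegec} by a permutation similarity taking $R$ to the block diagonal matrix with $P_2$ blocks, followed by a block diagonal unitary sending that to the $R_{q_i}$ blocks; you have merely written out the conjugations $U = V^\ast W V$ that the paper leaves implicit.
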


\begin{proof}
Note that 
$$
R = S \,  \left[ \begin{matrix} 
P_2 & 0 & 0 & \ldots & 0 \cr
0 & P_2 & 0 & \ldots & 0 \cr
0 & 0 & P_2 & \ldots & 0 \cr
\vdots & \vdots & \vdots & \ddots & \vdots  \cr  0 & 0 & 0 & \ldots & P_2  \cr \end{matrix} \right]
\, S^\ast ,
$$
where $S$ is an appropriate permutation matrix. Note also that a matrix $A$ is diagonal if and only if the matrix $SAS^\ast$ is diagonal. 
Moreover, the matrices
$$
\left[ \begin{matrix} 
P_2 & 0 & 0 & \ldots & 0 \cr
0 & P_2 & 0 & \ldots & 0 \cr
0 & 0 & P_2 & \ldots & 0 \cr
\vdots & \vdots & \vdots & \ddots & \vdots  \cr  0 & 0 & 0 & \ldots & P_2  \cr \end{matrix} \right]
\ \ \ {\rm and} \ \ \
\left[ \begin{matrix} 
R_{q_1} & 0 & 0 & \ldots & 0 \cr
0 & R_{q_2} & 0 & \ldots & 0 \cr
0 & 0 & R_{q_3} & \ldots & 0 \cr
\vdots & \vdots & \vdots & \ddots & \vdots  \cr  0 & 0 & 0 & \ldots & R_{q_k}  \cr \end{matrix} \right]
$$
are unitarily similar.
Hence, the desired conclusion follows 
from Lemma \ref{snegec}.
\end{proof}

\begin{lemma}\label{jan2}
Let 
$$
R_1 = \left[ \begin{matrix} I_p & 0 & 0 \cr 0 & 0 & 0 \cr 0 & 0& 0 \cr\end{matrix} \right]  \ \ \ {\rm and} \ \ \  R_2 = \left[ \begin{matrix} 0 & 0 & 0 \cr 0 & I_q & 0 \cr 0 & 0& 0 \cr\end{matrix} \right]
$$
be $n\times n$ matrices,
where $I_p$ and $I_q$ stand for the $p\times p$ identity matrix and the $q\times q$ identity matrix, respectively. Assume that
\begin{equation}\label{prdprd}
p,q, n-p-q < {n \over 2}.
\end{equation}
  Then there exists a unitary matrix $U \in M_n (\mathbb{C})$ such that every
$A \in M_n (\mathbb{C})$ that commutes with each of the matrices $R_1, R_2, UR_1U^\ast, UR_2U^\ast$ is a diagonal matrix.
\end{lemma}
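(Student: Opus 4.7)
The plan is to extend the strategy of Lemma \ref{jan} from a two-block setting to the three-block decomposition $\C^n = V_1 \oplus V_2 \oplus V_3$, where $V_j = \Image(R_j)$ (setting $R_3 := I - R_1 - R_2$) has dimensions $p$, $q$, and $r := n-p-q$ respectively. Any matrix commuting with $R_1$ and $R_2$ is automatically block-diagonal with respect to this decomposition, so the task reduces to choosing $U$ so that the additional commutations with $UR_1U^*$ and $UR_2U^*$ force those three diagonal blocks themselves to be diagonal matrices.

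The combinatorial heart of the construction is this: condition (\ref{prdprd}) is exactly the strict triangle inequality for $p,q,r$, so one can partition $\{1,\ldots,n\}$ into unordered pairs (plus one singleton when $n$ is odd) such that each pair consists of indices lying in two \emph{different} blocks $V_i, V_j$. For even $n$ the pair-counts $n_{12} = (p+q-r)/2$, $n_{13} = (p+r-q)/2$, $n_{23} = (q+r-p)/2$ are nonnegative integers and realize such a partition; for odd $n$ one first removes a singleton from any of the three blocks (each has size $\geq 1$ by (\ref{prdprd})) and pairs the remaining $n-1$ indices by the even case. After permuting the basis so the pairs occupy positions $\{2k-1,2k\}$, the matrices $R_1, R_2$ remain diagonal with each $2\times 2$ pair-block taken from $\{0,\,\mathrm{diag}(1,0),\,\mathrm{diag}(0,1)\}$, never $I_2$ (since no pair lies inside a single $V_i$). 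Take $U$ to be this permutation followed by a block-diagonal unitary whose $k$-th $2\times 2$ block $u_k$ rotates the (at most one) rank-$1$ block of $R_i|_k$ into $R_{q_k}$ with generic, pairwise distinct $q_k\in(0,1)$; on pairs of type $(1,2)$ the other rank-$1$ block is then automatically sent to $I - R_{q_k} = R_{1-q_k}^-$, and on the singleton (if any) $U$ acts as the identity.

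To verify that any $A$ commuting with $R_1, R_2, UR_1U^*, UR_2U^*$ is diagonal, one splits $A$ into pair-blocks $(A_{kl})$ and proceeds by cases, invoking Lemma \ref{snegec} together with the variants indicated in Remark \ref{zgoljdodatnarazlags}. If both pairs $k,l$ carry a nonzero $R_1$-block, the pair $(R_1, UR_1U^*)$ fits the hypothesis of Lemma \ref{snegec} and forces $A_{kl}=0$ for $k\neq l$ and $A_{kk}\in \C I_2$. If exactly one of them has vanishing $R_1$-block, the combined conditions $P_2 A_{kl} = 0$ (from $R_1$) and $R_{q_k} A_{kl} = 0$ (from $UR_1U^*$) force $A_{kl}=0$, using that the images of the two rank-$1$ projections $I - P_2$ and $R_{q_k}$ sit in generic position when $q_k\in(0,1)$. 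If $R_1$ vanishes on both pairs, they are necessarily of type $(2,3)$ and the symmetric argument using $(R_2, UR_2U^*)$ applies. The pair-to-singleton and singleton-to-singleton entries submit to the same kind of analysis. Conjugating back by the permutation turns the resulting ``scalar on each pair-block and on the singleton'' matrix into a genuine diagonal matrix in the original basis. The main obstacle is the bookkeeping of the three pair-types (plus the singleton case for odd $n$), which is precisely what the flexible variants of Lemma \ref{snegec} highlighted in Remark \ref{zgoljdodatnarazlags} are designed to handle.
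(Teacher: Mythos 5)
Your proposal is correct and follows essentially the same strategy as the paper: pair indices from distinct blocks using the triangle inequality $p,q,r<\frac{n}{2}$, conjugate by a block-diagonal unitary that sends each $P_2$-pair-block to a generic $R_{q_k}$ (sending the complementary rank-one block on type $(1,2)$ pairs to $R_{1-q_k}^-$), and run Lemma \ref{snegec} twice together with the variants of Remark \ref{zgoljdodatnarazlags}. The only difference is your treatment of the odd case, where you set aside a single index instead of the paper's $3\times 3$ corner block containing one index from each of the three blocks; both variants are valid.
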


\begin{proof}
We will first show that, for any  permutation matrix
$S$, there is no loss of generality in replacing
  $R_1$ and $R_2$ by $SR_1 S^\ast$ and $SR_2 S^\ast$, respectively.  Note first that $S$ is 
 unitary and, for every matrix $A$, the matrix $SAS^\ast$ is diagonal if and only if $A$ is diagonal.
So, assume that there exists a unitary matrix $W$ and a permutation matrix $S$ such that each $B$ that commutes with each of the matrices 
\begin{equation}\label{mr1}S R_1 S^\ast ,S R_2 S^\ast , W(S R_1S^\ast) W^\ast, W(S R_2 S^\ast)W^\ast\end{equation} is a diagonal matrix.
We choose the unitary matrix $U= S^\ast W S$. Now, if $A$ commutes with each of the matrices $R_1, R_2, UR_1U^\ast, UR_2U^\ast$, then clearly $SAS^\ast$  commutes with each of the matrices 
listed in \eqref{mr1}.
Hence $SAS^\ast$ is diagonal, and consequently, $A$ is diagonal, as desired.

Let us start with the case where $n$ is even. After applying an appropriate permutation similarity and using (\ref{prdprd}) we may assume that
$$
R_1 = \left[ \begin{matrix} T_1 & 0 & 0 \cr 0 & T_2 & 0 \cr 0 & 0& 0 \cr\end{matrix} \right]  \ \ \ {\rm and} \ \ \  R_2 = \left[ \begin{matrix} 0 & 0 & 0 \cr 0 & S_1 & 0 \cr 0 & 0& S_2 \cr\end{matrix} \right],
$$
(the corresponding blocks in block matrix representations of $R_1$ and $R_2$ are of the same sizes)
where
$$
T_1 = \left[ \begin{matrix} 
P_2 & 0 & 0 & \ldots & 0 \cr
0 & P_2 & 0 & \ldots & 0 \cr
0 & 0 & P_2 & \ldots & 0 \cr
\vdots & \vdots & \vdots & \ddots & \vdots  \cr  0 & 0 & 0 & \ldots & P_2  \cr \end{matrix} \right] 
$$
is of  size  $2r_1 \times 2r_1$ where $r_1=\frac{n}{2}-q$,
$$T_2 = \left[ \begin{matrix} 
P_2 & 0 & 0 & \ldots & 0 \cr
0 & P_2 & 0 & \ldots & 0 \cr
0 & 0 & P_2 & \ldots & 0 \cr
\vdots & \vdots & \vdots & \ddots & \vdots  \cr  0 & 0 & 0 & \ldots & P_2  \cr \end{matrix} \right]
$$
and
$$
S_1 = \left[ \begin{matrix} 
I-P_2 & 0 & 0 & \ldots & 0 \cr
0 & I-P_2 & 0 & \ldots & 0 \cr
0 & 0 & I-P_2 & \ldots & 0 \cr
\vdots & \vdots & \vdots & \ddots & \vdots  \cr  0 & 0 & 0 & \ldots & I-P_2  \cr \end{matrix} \right]$$ 
are of  size $2r_2 \times 2r_2$ where $r_2 = p+q-\frac{n}{2}$,
and $$
S_2 = \left[ \begin{matrix} 
P_2 & 0 & 0 & \ldots & 0 \cr
0 & P_2 & 0 & \ldots & 0 \cr
0 & 0 & P_2 & \ldots & 0 \cr
\vdots & \vdots & \vdots & \ddots & \vdots  \cr  0 & 0 & 0 & \ldots & P_2  \cr \end{matrix} \right].
$$ is of  size $2r_3 \times 2r_3$ where $r_3=\frac{n}{2}-p$.


Choose  real numbers $q_1, \ldots , q_{r_1}, t_1, \ldots, t_{r_2}, s_1 , \ldots, s_{r_3}  \in (0,1)$  
such that  $q_1, \ldots , q_{r_1}, t_1, \ldots, t_{r_2}$ as well as
 $ 1-t_1, \ldots,1- t_{r_2}, s_1 , \ldots, s_{r_3}$ are pairwise different. Take $2\times 2$ unitary matrices
$U_1 , \ldots , U_{r_1} , V_1 ,\ldots, V_{r_2}, W_1 , \ldots W_{r_3}$  satisfying
$$
U_j P_2 U_{j}^\ast = R_{q_j}, \ \ \ j= 1, \ldots , r_1,
$$
$$
V_j P_2 V_{j}^\ast = R_{t_j}, \ \ \ j= 1, \ldots , r_2,
$$
$$
W_j P_2 W_{j}^\ast = R_{s_j}, \ \ \ j= 1, \ldots , r_3 .
$$
Let $U$ be the $n \times n$ unitary matrix that is block diagonal with $2\times 2$ unitary matrices $U_1 , \ldots , U_{r_1} , V_1 ,\ldots, V_{r_2}, W_1 , \ldots W_{r_3}$ as diagonal blocks. Then
$$
U R_1 U^\ast = \left[ \begin{matrix} K_1 & 0 & 0 \cr 0 & K_2 & 0 \cr 0 & 0& 0 \cr\end{matrix} \right]  \ \ \ {\rm and} \ \ \ U R_2 U^\ast  = \left[ \begin{matrix} 0 & 0 & 0 \cr 0 & L_1 & 0 \cr 0 & 0& L_2 \cr\end{matrix} \right],
$$
where
$$
K_1 = \left[ \begin{matrix} 
R_{q_1} & 0 & 0 & \ldots & 0 \cr
0 & R_{q_2} & 0 & \ldots & 0 \cr
0 & 0 & R_{q_3} & \ldots & 0 \cr
\vdots & \vdots & \vdots & \ddots & \vdots  \cr  0 & 0 & 0 & \ldots & R_{q_{r_1}}  \cr \end{matrix} \right], \ \ \
K_2 = \left[ \begin{matrix} 
R_{t_1} & 0 & 0 & \ldots & 0 \cr
0 & R_{t_2} & 0 & \ldots & 0 \cr
0 & 0 & R_{t_3} & \ldots & 0 \cr
\vdots & \vdots & \vdots & \ddots & \vdots  \cr  0 & 0 & 0 & \ldots & R_{t_{r_2}}  \cr \end{matrix} \right],
$$
and
$$
L_1 = \left[ \begin{matrix} 
R_{1-t_1}^- & 0 & 0 & \ldots & 0 \cr
0 & R_{1-t_2}^- & 0 & \ldots & 0 \cr
0 & 0 & R_{1-t_3}^- & \ldots & 0 \cr
\vdots & \vdots & \vdots & \ddots & \vdots  \cr  0 & 0 & 0 & \ldots & R_{1-t_{r_2}}^-  \cr \end{matrix} \right],$$ 
 $$
L_2 = \left[ \begin{matrix} R_{s_1} & 0 & 0 & \ldots & 0 \cr
0 & R_{s_2} & 0 & \ldots & 0 \cr
0 & 0 & R_{s_3} & \ldots & 0 \cr
\vdots & \vdots & \vdots & \ddots & \vdots  \cr  0 & 0 & 0 & \ldots & R_{s_{r_3}}  \cr \end{matrix} \right].
$$

Assume now that $A \in M_n (\mathbb{C})$ commutes with  the matrices $R_1, R_2, UR_1U^\ast$, and $UR_2U^\ast$. Using Lemma \ref{snegec}, we see that
 $A R_1 = R_1 A$ and $A (UR_1 U^\ast) = (UR_1 U^\ast) A$ imply that
$$
A =  \left[ \begin{matrix}  D_1 & 0 & 0 \cr 0 & D_2 & 0 \cr 0 & 0 & *  \cr \end{matrix} \right]
$$
for some diagonal matrices $D_1$ and $D_2$ and some matrix $*$. Similarly, the other two commutativity conditions yield
that
$$
A =  \left[ \begin{matrix}  * & 0 & 0 \cr 0 & D_3 & 0 \cr 0 & 0 & D_4  \cr \end{matrix} \right]
$$
for some diagonal matrices $D_3$ and $D_4$ and some matrix $*$. Hence, $A$ is diagonal, as desired.

It remains to consider the case where $n$ is odd. 
 After applying an appropriate permutation similarity and using (\ref{prdprd}) we can assume that
$$
R_1 = \left[ \begin{matrix} T_0 & 0 & 0 &0 \cr 0& T_1 & 0 & 0 \cr 0& 0 & T_2 & 0 \cr0&  0 & 0& 0 \cr\end{matrix} \right]  \ \ \ {\rm and} \ \ \  R_2 = \left[ \begin{matrix}  S_0 & 0 & 0 &0 \cr 0& 0 & 0 & 0 \cr 0& 0 & S_1 & 0 \cr 0& 0 & 0& S_2 \cr\end{matrix} \right],
$$
(the corresponding blocks in block matrix representations of $R_1$ and $R_2$ are of the same sizes; some rows and the corresponding columns may be  absent)
where $T_1$, $T_2$, $S_1$, and $S_2$ are as above, and
$$
T_0 =  \left[ \begin{matrix}  1 & 0 & 0 \cr 0 & 0 & 0 \cr 0 & 0 & 0 \cr \end{matrix} \right] \ \ \ {\rm and} \ \ \ 
S_0 =  \left[ \begin{matrix}  0 & 0 & 0 \cr 0 & 1 & 0 \cr 0 & 0 & 0 \cr \end{matrix} \right] .
$$

Set
$$
K_0 =  \left[ \begin{matrix}  {1 \over 2} & {1 \over 2} & 0 \cr {1 \over 2} & {1 \over 2} & 0 \cr 0 & 0 & 0 \cr \end{matrix} \right] \ \ \ {\rm and} \ \ \ 
L_0 =  \left[ \begin{matrix}  {1 \over 3} &  -{1 \over 3} &  {1 \over 3} \cr - {1 \over 3} &  {1 \over 3} & - {1 \over 3} \cr  {1 \over 3} & -  {1 \over 3} &  {1 \over 3} \cr \end{matrix} \right] .
$$
Observe that $K_0$ and $L_0$ are, just as $T_0$ and $S_0$, a pair of orthogonal projections of rank one. Therefore, there exists  a unitary $3\times 3$ matrix $U_0$ such that
$$
U_0 T_0 U_{0}^\ast = K_0 \ \ \mbox{and} \ \ U_0 S_0 U_{0}^\ast = L_0 .
$$
Consequently, there exists
a unitary  $n\times n$ matrix $U$ such that
$$
U R_1 U^\ast = \left[ \begin{matrix}K_0 & 0 & 0 & 0 \cr 0&  K_1 & 0 & 0 \cr 0& 0 & K_2 & 0 \cr 0& 0 & 0& 0 \cr\end{matrix} \right]  \ \ \ {\rm and} \ \ \ U R_2 U^\ast  = \left[ \begin{matrix} L_0 & 0 & 0 & 0 \cr 0 & 0 & 0 & 0 \cr 0&  0 & L_1 & 0 \cr 0 & 0 & 0& L_2 \cr\end{matrix} \right],
$$
where 
 $K_1$, $K_2$, $L_1$, and $L_2$ are as above with the additional requirement that all the real numbers  $q_1, \ldots , q_{r_1}, t_1, \ldots, t_{r_2}, s_1 , \ldots, s_{r_3}$, and $ 1-t_1, \ldots,1- t_{r_2}$ are different from $\frac{1}{2}$ and $\frac{1}{3}$. 

Now take an $n \times n$ matrix $A$ that commutes with  each of the matrices $R_1,R_2,UR_1U^\ast$, and $UR_2U^\ast$. Our goal is to show that $A$ is a diagonal.


As  $A$ commutes with $R_1$ and $UR_1 U^\ast$,
it follows from Lemma \ref{snegec}, along with Remark \ref{zgoljdodatnarazlags}, that, in particular, $A$ is of the form
$$
A=  \left[ \begin{matrix} * & 0 & 0 &* \cr 0& D' & 0 & 0 \cr 0& 0 & D'' & 0 \cr *&  0 & 0& * \cr\end{matrix} \right],
$$
where the matrices $D'$ and $D''$ are diagonal.

Similarly, we claim that the condition that 
 $A$ commutes with $R_2$ and $UR_2 U^\ast$ implies that $A$ is of the form
$$
A=  \left[ \begin{matrix} * & * & 0 &0 \cr * & * & 0 & 0 \cr 0& 0 & D''' & 0 \cr 0&  0 & 0& D'''' \cr\end{matrix} \right]
$$
for some diagonal matrices $D'''$ and $D''''$. Indeed, although our situation is, due to the upper $3\times 3$ corner, not exactly the same as the one considered  in Remark \ref{zgoljdodatnarazlags},
it is the same up to unitary similarity. As our conclusion concerns only the form of the lower corners, this is enough. 

The last two paragraphs together show that 
$$
A=  \left[ \begin{matrix} * & 0 & 0 &0 \cr 0 & D' & 0 & 0 \cr 0& 0 & D''' & 0 \cr 0&  0 & 0& D'''' \cr\end{matrix} \right].
$$
Since only diagonal $3\times 3$ matrices commute with both  $S_0$ and $T_0$, and since 
 $A$ commutes with  $R_1$ and $R_2$, it follows that the upper $3\times 3$ corner of $A$ is also a diagonal matrix.
\end{proof} 

We continue with two similar lemmas. The first one will be derived from  Lemma \ref{jan}  and the second one from Lemma \ref{jan2}. First we need some notation.

By  $M_{n}^0 (\mathbb{C})$ we denote the set of all $n\times n$ complex matrices  all of whose diagonal entries are equal to zero.

Assume now that $n$ is  even. Denote by $\V_{\frac{n}{2}}$ the linear space  of all $n\times n$ complex matrices of the form
$$
\left[ \begin{matrix}  0 & \ast \cr \ast & 0 \cr \end{matrix} \right] ,
$$
where all block matrices are of size $\frac{n}{2}\times \frac{n}{2}$; of course, $0$ stands for the zero matrix and $*$ for any matrix.

\begin{lemma} \label{nova1}Let $n$ be an even integer. There exists a unitary  matrix $U\in M_n(\C)$ such that
$$M_{n}^0 (\mathbb{C}) \subseteq \V_{\frac{n}{2}} + U \V_{\frac{n}{2}} U^*.$$
\end{lemma}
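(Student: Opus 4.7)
The plan is to deduce the claim from Lemma \ref{jan} by an orthogonal complement argument with respect to the Hilbert--Schmidt inner product $\langle X,Y\rangle = \tr(XY^*)$ on $M_n(\C)$. Upon taking orthogonal complements, the required inclusion $M_n^0(\C) \subseteq \V_{n/2} + U\V_{n/2}U^*$ is equivalent to $(\V_{n/2}+U\V_{n/2}U^*)^\perp \subseteq M_n^0(\C)^\perp$.

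First I would identify the two relevant orthogonal complements. A direct block computation (the inner product of two matrices in $\V_{n/2}$ reduces to a sum of traces of products of off-diagonal blocks) shows that $\V_{n/2}^\perp$ is the space of block-diagonal matrices with $\frac{n}{2}\times\frac{n}{2}$ blocks, which is exactly the commutant of $R = \diag(I_{n/2},0)$. Similarly, a trivial entrywise computation shows that $M_n^0(\C)^\perp$ is the space of diagonal matrices.

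Since conjugation by a unitary preserves the Hilbert--Schmidt inner product, $(U\V_{n/2}U^*)^\perp = U\V_{n/2}^\perp U^*$, which is precisely the commutant of $URU^*$. Using $(S_1+S_2)^\perp = S_1^\perp \cap S_2^\perp$, I would then obtain
$$(\V_{n/2} + U\V_{n/2}U^*)^\perp = \{A \in M_n(\C) : AR=RA \text{ and } A(URU^*) = (URU^*)A\}.$$

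Finally, taking $U$ to be the unitary matrix supplied by Lemma \ref{jan}, this intersection is contained in the space of diagonal matrices, which is exactly $M_n^0(\C)^\perp$. That completes the argument. No serious obstacle is expected: Lemma \ref{jan} does the essential work, and the rest is a routine translation, the key point being the identification of $\V_{n/2}^\perp$ with the commutant of $R$.
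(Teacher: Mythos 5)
Your proposal is correct and follows essentially the same route as the paper: reducing the inclusion to Lemma \ref{jan} by passing to orthogonal complements with respect to the Hilbert--Schmidt inner product, and identifying $\V_{n/2}^\perp$ with the commutant of $R$. The only cosmetic difference is that you phrase the reduction as an equivalence of inclusions after taking $\perp$, whereas the paper computes $(\V + U\V U^*)^\perp \subseteq D_n(\C)$ and then takes $\perp$ once more; both amount to the same thing in a finite-dimensional inner product space.
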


\begin{proof}
Recall that 
\begin{equation} \label{inn}\langle X, Y \rangle = {\rm tr}\, (XY^\ast)\end{equation}
defines an inner product on 
$M_{n} (\mathbb{C})$. 
Note that $\langle X,Y \rangle = \sum_{i,j=1}^n  x_{ij}\overline{y_{ij}}$, where $X=(x_{ij})$ and $Y=(y_{ij})$, so this is actually the standard inner product on $M_n (\mathbb{C})$ (when identified with  $\mathbb{C}^{n^2}$).  Writing $\V$ for $\V_{\frac{n}{2}}$ for brevity, we thus have that
 with respect to this inner product, 
$
{\V}^\perp$ is the linear space of all matrices of the form
$$
\left[ \begin{matrix}   \ast & 0 \cr 0 & \ast  \cr \end{matrix} \right] .
$$

Let
$$
R = \left[ \begin{matrix} I_{\frac{n}{2}} & 0 \cr 0 & 0 \cr \end{matrix} \right].
$$
Observe that for every matrix $M \in M_{n} (\mathbb{C})$, 
$$ M\in {\V}^\perp \iff
RM = MR.
$$
Hence,  for any
unitary matrix $U$, 
$$
M \in U {\V}^\perp U^\ast \iff M (URU^\ast) =   (URU^\ast) M.
$$
The map $X \mapsto UXU^\ast$ is a unitary operator on $M_{n} (\mathbb{C})$. Indeed, for every pair $X,Y \in M_{n} (\mathbb{C})$ we have
$$
\langle  UXU^\ast,  UYU^\ast \rangle = {\rm tr}\, (  UXU^\ast ( UYU^\ast)^\ast) = {\rm tr}\, (XY^\ast) = \langle X,Y \rangle .
$$
It follows that
$$
 U {\V}^\perp U^\ast =  (U {\V} U^\ast)^\perp .
$$

Assume now that $U$ is the matrix from Lemma \ref{jan}. Then every matrix $M \in {\V}^\perp \cap  (U {\V} U^\ast)^\perp$ is diagonal.
Hence, $$({\V} +  U {\V} U^\ast)^\perp={\V}^\perp \cap  (U {\V} U^\ast)^\perp   \subseteq D_n (\mathbb{C}),$$ where $ D_n (\mathbb{C})$ denotes the set of all diagonal matrices.
It follows that
$$
M_{n}^0 (\mathbb{C}) = D_n (\mathbb{C})^\perp \subseteq{\V} +  U {\V} U^\ast,
$$
which completes the proof.
\end{proof}

Let $p,q,r$ be positive integers and let $n=p+q+r$ ($n$ can now be either even or odd). 
Denote by
 $\V_{p,q,r}$  the set of all $n\times n$ matrices of the form
$$
\left[ \begin{matrix}  0 & \ast & \ast \cr  \ast & 0 & \ast \cr \ast & \ast & 0 \cr \end{matrix} \right],
$$
where the diagonal blocks are zero matrices of sizes 
$p\times p$, $q\times q$, and $r\times r$, respectively, and  the $*$'s stand for any matrices of  appropriate sizes.

\begin{lemma} \label{nova2}Let $n$ be a positive integer and let $p,q,r$ be positive integers such that $n=p+q+r$ and $p,q,r < \frac{n}{2}$.
 There exists a unitary matrix $U\in M_n(\C)$ such that
$$M_{n}^0 (\mathbb{C}) \subseteq \V_{p,q,r} + U\V_{p,q,r} U^*.$$
\end{lemma}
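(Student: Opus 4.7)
The plan is to follow the template of the proof of Lemma \ref{nova1}, replacing the single idempotent $R$ and Lemma \ref{jan} with the pair $R_1,R_2$ from Lemma \ref{jan2}. Equip $M_n(\mathbb{C})$ with the Frobenius inner product $\langle X,Y\rangle = \tr(XY^\ast)$ from \eqref{inn}, for which conjugation by any unitary $U$ is an isometry, so that $U\mathcal W^\perp U^\ast = (U\mathcal W U^\ast)^\perp$ for every subspace $\mathcal W$. Writing $\V$ for $\V_{p,q,r}$, the first key observation is the identification
$$
\V^\perp = \{M \in M_n(\mathbb{C}) : MR_1 = R_1M \text{ and } MR_2 = R_2M\},
$$
i.e.\ $\V^\perp$ is exactly the space of block diagonal matrices with diagonal blocks of sizes $p\times p$, $q\times q$, $r\times r$. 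This follows because $M$ commutes with $R_1 = \mathrm{diag}(I_p,0,0)$ iff the off-diagonal blocks linking the first block row/column to the rest vanish, and similarly for $R_2 = \mathrm{diag}(0,I_q,0)$; together these force $M$ to be block diagonal in the $(p,q,r)$-partition.

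Next, since by hypothesis $p,q,r<\tfrac{n}{2}$, Lemma \ref{jan2} supplies a unitary $U\in M_n(\mathbb{C})$ such that any matrix commuting with all four of $R_1,R_2,UR_1U^\ast,UR_2U^\ast$ is diagonal. Using step one applied both to $\V$ and (after unitary conjugation) to $U\V U^\ast$, we obtain
$$
\V^\perp \cap (U\V U^\ast)^\perp \subseteq D_n(\mathbb{C}),
$$
where $D_n(\mathbb{C})$ denotes the diagonal matrices.

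Finally, taking orthogonal complements and using that $D_n(\mathbb{C})^\perp = M_n^0(\mathbb{C})$ yields
$$
M_n^0(\mathbb{C}) = D_n(\mathbb{C})^\perp \subseteq \bigl(\V^\perp \cap (U\V U^\ast)^\perp\bigr)^\perp = \V + U\V U^\ast,
$$
which is the desired conclusion. The only nontrivial input is Lemma \ref{jan2}; the rest of the argument is the same orthogonality/duality trick as in Lemma \ref{nova1}, so I do not expect any serious obstacle beyond verifying the commutation characterization of $\V^\perp$, which is a direct block-matrix calculation.
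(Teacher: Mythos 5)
Your argument is correct and follows the paper's own proof of this lemma essentially step for step: identifying $\V_{p,q,r}^\perp$ as the block-diagonal matrices characterized by commutation with $R_1$ and $R_2$, invoking Lemma \ref{jan2} to get a unitary $U$ so that $\V^\perp \cap (U\V U^\ast)^\perp$ consists of diagonal matrices, and then passing to orthogonal complements exactly as in Lemma \ref{nova1}. There is no gap and no substantive difference from the paper's reasoning.
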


\begin{proof}
As in the preceding proof, consider $M_n(\C)$ as the inner product space with respect to the inner product \eqref{inn}.
Observe that
 ${\mathcal V}^\perp$ 
is the linear space of all matrices of the form
$$
\left[ \begin{matrix}  \ast & 0 & 0 \cr  0 & \ast & 0  \cr 0 & 0 & \ast  \cr \end{matrix} \right] .
$$
Similarly as in the preceding proof we observe that a matrix $M \in M_{n} (\mathbb{C})$ belongs to ${\mathcal V}^\perp$ if and only if 
$$
R_1 M = MR_1 \ \ \ {\rm and} \ \ \ R_2 M = M R_2,
$$
where
$$
R_1 = \left[ \begin{matrix} I_p & 0 & 0 \cr 0 & 0 & 0 \cr 0 & 0 & 0 \cr\end{matrix} \right] \ \ \ {\rm and} \ \ \ 
R_2 = \left[ \begin{matrix} 0 & 0 & 0 \cr 0 & I_q & 0 \cr 0 & 0 & 0 \cr\end{matrix} \right] .
$$
If we take the unitary matrix $U$ from
Lemma \ref{jan2}, we see that every matrix $M \in {\mathcal V}^\perp \cap  (U {\mathcal V} U^\ast)^\perp$ is diagonal. We  then  complete the proof in exactly the same way as in the proof of Lemma
\ref{nova1}.
\end{proof}

The last lemma we need is an easy consequence of
the Sylvester-Rosenblum theorem---see the discussion in
\cite[Section 3]{BR}.

\begin{lemma}\label{vohka}
Let $n_1,\dots,n_k$ be positive integers, let $n=n_1+\dots + n_k$, and let $A$ an $n\times n$ block diagonal matrix,
$$
A = \left[ \begin{matrix} 
A_1 & 0 & 0 & \ldots & 0 \cr
0 & A_2 & 0 & \ldots & 0 \cr
0 & 0 & A_3 & \ldots & 0 \cr
\vdots & \vdots & \vdots & \ddots & \vdots  \cr  0 & 0 & 0 & \ldots & A_k  \cr \end{matrix} \right],
$$
where $A_i$ is an $n_i\times n_i$ matrix and the zeros are matrices of appropriate sizes.
If the spectra of $A_1,A_2,\dots,A_k$  are pairwise disjoint, then
 $A$ is similar to
any matrix of the form
$$
\left[ \begin{matrix} 
A_1 & * & * & \ldots & * \cr
0 & A_2 & * & \ldots & * \cr
0 & 0 & A_3 & \ldots & * \cr
\vdots & \vdots & \vdots & \ddots & \vdots  \cr  0 & 0 & 0 & \ldots & A_k  \cr \end{matrix} \right],
$$
as well as to
any matrix of the form
$$
\left[ \begin{matrix} 
A_1 & 0 & 0 & \ldots & 0 \cr
* & A_2 & 0 & \ldots & 0 \cr
* & * & A_3 & \ldots & 0 \cr
\vdots & \vdots & \vdots & \ddots & \vdots  \cr  * & * & * & \ldots & A_k  \cr \end{matrix} \right],
$$
where the $*$'s stand for any matrices of  appropriate sizes.
\end{lemma}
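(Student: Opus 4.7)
The plan is to derive this directly from the Sylvester--Rosenblum theorem, which asserts that if $B$ and $C$ are square matrices with $\sigma(B)\cap\sigma(C)=\emptyset$, then the equation $BX-XC=Y$ has a (unique) solution $X$ for every matrix $Y$ of compatible size. I would handle the two-block upper-triangular case first and then iterate.

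For $k=2$, given
$$
M=\left[\begin{matrix} A_1 & C \\ 0 & A_2 \end{matrix}\right],
$$
I would conjugate by $P=\left[\begin{matrix} I & X \\ 0 & I \end{matrix}\right]$ (whose inverse is $\left[\begin{matrix} I & -X \\ 0 & I \end{matrix}\right]$). A short calculation shows that the only block of $P^{-1}MP$ that changes is the upper-right one, which becomes $A_1 X - X A_2 + C$. Choosing $X$ to solve $A_1 X - X A_2 = -C$---possible by Sylvester--Rosenblum, because $\sigma(A_1)\cap\sigma(A_2)=\emptyset$---makes this block vanish and reduces $M$ to the block diagonal matrix with diagonal blocks $A_1, A_2$.

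For general $k$ I would induct on $k$. Grouping the bottom-right $(k-1)$ diagonal blocks into a single block $M'$, the upper-triangular matrix in the statement has the $2\times 2$ block form $\left[\begin{matrix} A_1 & D \\ 0 & M'\end{matrix}\right]$, and $M'$ is itself upper block-triangular with diagonal blocks $A_2,\ldots,A_k$, so $\sigma(M')=\bigcup_{i=2}^{k}\sigma(A_i)$ is disjoint from $\sigma(A_1)$. The two-block step therefore reduces the matrix to the block diagonal matrix with diagonal blocks $A_1$ and $M'$, and applying the inductive hypothesis to $M'$ (and extending the similarity by $I_{n_1}$ on the first block) reduces this further to $A$. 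The lower-triangular claim is symmetric: in the two-block step use $P=\left[\begin{matrix} I & 0 \\ X & I\end{matrix}\right]$ to arrive at the Sylvester equation $X A_1 - A_2 X = -C$, and iterate by grouping the top-left blocks. Alternatively, pass to transposes and use that every square complex matrix is similar to its transpose.

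There is no real obstacle; the only nontrivial input is Sylvester--Rosenblum, invoked as a black box, and everything else is straightforward bookkeeping.
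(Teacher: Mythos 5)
Your proof is correct and is the standard argument the paper has in mind: the paper states Lemma \ref{vohka} without proof, citing the Sylvester--Rosenblum theorem and a discussion in \cite{BR}, and your block-unipotent conjugation together with induction on the number of diagonal blocks is exactly how one fills in that reference. (The alternative you mention for the lower-triangular version, passing to transposes, works equally well; and the sign on the right-hand side of the lower-triangular Sylvester equation is immaterial since $C$ is arbitrary.)
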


We are now in a position to prove the theorem to which we have been heading.

\begin{theorem}\label{taglavn}
Assume that each eigenvalue of the matrix $B \in M_n (\mathbb{C})$ has algebraic multiplicity $\le \frac{n}{2}$. Then every trace zero matrix $A\in M_n(\C)$ can be written as 
$$
A = B'-B''+B'''-B'''',
$$
where each of $B',B'',B''',B''''$ is similar to $B$.
\end{theorem}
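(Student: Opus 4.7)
The plan is to reduce $B$ to block-diagonal form, reduce $A$ to zero-diagonal form, and then assemble the decomposition using the preceding lemmas.

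First, by Lemma \ref{brsvt}, after passing to a similar matrix I would assume that $B$ itself is block diagonal: either $B = \mathrm{diag}(B_1, B_2)$ with $B_1, B_2$ of size $\frac{n}{2} \times \frac{n}{2}$ (case (a)), or $B = \mathrm{diag}(B_1, B_2, B_3)$ with block sizes $p, q, r < \frac{n}{2}$ (case (b)); in either case the diagonal blocks have pairwise disjoint spectra. This reduction is harmless because the property to be proved depends on $B$ only through its similarity class. The analogous invariance in the variable $A$---if $A = SMS^{-1}$ and $M$ admits the desired decomposition into four summands similar to $B$, then so does $A$---combined with the classical theorem of Fillmore that every trace zero matrix over $\mathbb{C}$ is similar to one with zero diagonal, lets me further assume $A \in M_{n}^{0}(\mathbb{C})$.

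The central step is then to express every element of $\V_{n/2}$ (in case (a)) or of $\V_{p,q,r}$ (in case (b)) as a single difference $B' - B''$ of two matrices similar to $B$. Given such a $V$, I would denote by $X$ the collection of off-diagonal blocks of $V$ strictly above the block diagonal and by $Y$ those strictly below, and then take $B'$ to be the block upper triangular matrix with diagonal blocks $B_1, B_2$ (or $B_1, B_2, B_3$) and the blocks of $X$ inserted in the strictly upper positions, and $B''$ the block lower triangular matrix with the same diagonal blocks and the negatives of the blocks of $Y$ in the strictly lower positions. Lemma \ref{vohka}, applied in its two forms, will then guarantee that $B'$ and $B''$ are both similar to $B$---this is precisely where the disjoint-spectra hypothesis enters---and by construction $B' - B'' = V$.

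To conclude, I would invoke Lemma \ref{nova1} in case (a) or Lemma \ref{nova2} in case (b) to produce a unitary $U \in M_n(\mathbb{C})$ with $M_{n}^{0}(\mathbb{C}) \subseteq \V + U \V U^{\ast}$, write $A = V_1 + U V_2 U^{\ast}$ with $V_1, V_2 \in \V$, and decompose $V_1 = B' - B''$ and $V_2 = C' - C''$ as in the preceding paragraph, arriving at
$$ A = B' - B'' + U C' U^{\ast} - U C'' U^{\ast}, $$
with all four terms similar to $B$, since conjugation by the unitary $U$ preserves similarity classes. I do not anticipate a serious obstacle here: the essential combinatorial and unitary content is isolated in Lemmas \ref{nova1}, \ref{nova2}, and \ref{vohka}, and the rest is bookkeeping. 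The one step that will require a moment's care is the Fillmore reduction, where the conjugating matrix is only invertible rather than unitary---but this is harmless, since ``similar to $B$'' is invariant under any conjugation.
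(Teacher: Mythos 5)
Your argument is correct and mirrors the paper's own proof step for step: reduce $A$ to zero‑diagonal form, reduce $B$ to block‑diagonal form via Lemma~\ref{brsvt}, use Lemma~\ref{vohka} to write every element of $\V_{n/2}$ (resp.\ $\V_{p,q,r}$) as $B'-B''$ with $B',B''$ similar to $B$, and finish with Lemma~\ref{nova1} (resp.\ Lemma~\ref{nova2}) and the observation that unitary conjugation preserves similarity classes. The only cosmetic difference is the attribution of the zero‑diagonal reduction (you cite Fillmore, the paper cites Horn--Johnson).
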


\begin{proof}
Since every trace zero matrix is  similar
to a matrix in $M_{n}^0 (\mathbb{C})$
\cite[Problem 3 on p.\,77]{HJ}, there is no loss of generality in assuming that $A\in M_{n}^0 (\mathbb{C})$.

The matrix $B$ satisfies one of the conditions (a) or (b) of Lemma \ref{brsvt}. 

Suppose first that $B$ satisfies (b).
Then $B$ is similar to a block diagonal matrix
\begin{equation}\label{kaeng}
B = \left[ \begin{matrix} B_1 & 0 &0 \cr 0 & B_2 & 0\cr 0 & 0 & B_3 \end{matrix} \right],
\end{equation}
where the diagonal blocks are of sizes $p \times p$, $q\times q$, and $r\times r$ with
$
p,q,r < {n \over 2}$,
and the spectra of $B_1$ and $B_2$ and $B_3$ are pairwise disjoint. Take any $C \in \V_{p,q,r}$. Then
$$
C = \left[ \begin{matrix}  0 & C_1 & C_2 \cr  C_3 & 0 & C_4 \cr C_5 & C_6 & 0 \cr \end{matrix} \right] 
$$
for some matrices $C_i$ of appropriate sizes. By Lemma \ref{vohka}, both matrices 
$$
B'= \left[ \begin{matrix}  B_1 & C_1 & C_2 \cr  0 & B_2 & C_4 \cr 0 & 0 & B_3 \cr \end{matrix} \right] \ \ \ {\rm and} \ \ \  B''= \left[ \begin{matrix}  B_1 & 0 & 0 \cr  -C_3 & B_2 & 0 \cr -C_5 & -C_6 & B_3 \cr \end{matrix} \right]
\ \ 
$$
are similar to $B$, and clearly,
$$
C = B'-B''.
$$
Thus, every matrix in $\V_{p,q,r}$ is a difference of two matrices that are similar to $B$.
The desired conclusion therefore follows from Lemma \ref{nova2}.

The case where $B$ satisfies (a) can be handled similarly. One just has to apply Lemma \ref{nova1} instead of Lemma \ref{nova2}, and use Lemma \ref{vohka} for $k=2$ rather than for 
$k=3$.
\end{proof}

\begin{remark} Some restriction on the  multiplicity of eigenvalues of the matrix $B$ is definitely needed in order to arrive at the conclusion of the theorem. For example,
if $B$ has an eigenvalue $\lambda$ whose geometric multiplicity is  $> \frac{3n}{4}$, then for any invertible matrices $T_1,T_2,T_3,T_4$,
\begin{align*}
&T_1BT_1^{-1} - T_2BT_2^{-1} + T_3BT_3^{-1} - T_4BT_4^{-1} \\
 = & T_1(B-\lambda I)T_1^{-1} - T_2(B-\lambda I)T_2^{-1} + T_3(B-\lambda I)T_3^{-1} - T_4(B-\lambda I)T_4^{-1}  \end{align*}
is never an invertible matrix.
\end{remark}

\begin{remark}\label{conj}
Let $\A$ be any algebra and let $f=f(X_1,\dots,X_m)$ be any polynomial. If $t$ is an invertible element in $\A$, then
$$tf(a_1,\dots,a_m)t^{-1} = f(ta_1t^{-1},\dots,ta_mt^{-1}).$$
This shows that $f(\A)$ is closed under conjugation by invertible elements.
\end{remark}

Proving  our first main theorem is now straightforward.

\bigskip

\noindent
{\em Proof of Theorem \ref{main}}.
 By \cite[Theorem 3.8]{B}, $f(\A)$ contains a matrix  all of whose  eigenvalues have
algebraic multiplicity  $\le \frac{n}{2}$. Remark \ref{conj} tells us that any matrix that is similar to $B$
also lies in $f(\A)$.
 The desired conclusion therefore follows from Theorem \ref{taglavn}.
\hfill\(\Box\)

\section{Proof of Theorem \ref{main2}} \label{lg}

Theorem \ref{main2} will be derived from the next lemma which concerns central simple algebras. By such an algebra we mean, as usual, a simple 
algebra whose center consists of scalar multiples of unity, and is also finite-dimensional---however, see Remark \ref{rfd}.

 We write 
$C_\A(a)$ 
for the centralizer of the element $a$ in the algebra 
$\A$, i.e., $C_\A(a)=\{x\in \A\,|\, xa=ax\}$. Note that if $\A=M_2(F)$, $C_\A(a) = {\rm span}\,\{1,a\}$ for every nonscalar matrix $a$. In particular, $C_\A(a)$ is a commutative subalgebra.

\begin{lemma}\label{l}
Let $F$ be any field, let $\A$ be a central simple  $F$-algebra, and let $\U =\{u\in \A\,|\,u^2 \in F1\}$.
Suppose there exists a $d\in \A$ such that
\begin{enumerate}
\item[{\rm (a)}] $d$ is a sum of two elements from $\U$, and
\item[{\rm (b)}] $C_\A(d^2)$ is a commutative subalgebra.
\end{enumerate}
Then $\dim_F \A = 1$ or $\dim_F \A = 4$.
\end{lemma}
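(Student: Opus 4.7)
The plan is to distill the two hypotheses into a single quadratic polynomial over $F$ that annihilates $d^2$, and then to appeal to the standard dictionary between commutative centralizers and nonderogatory elements in a matrix algebra.

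Writing $d = u_1 + u_2$ with $u_i^2 = \alpha_i \in F$, one has
$$d^2 = \alpha_1 + \alpha_2 + u_1u_2 + u_2u_1,$$
and a direct computation (using $u_i^2 \in F$) shows that each $u_i$ commutes with $d^2$, so $u_1, u_2 \in C_\A(d^2)$. The commutativity hypothesis (b) then forces $u_1u_2 = u_2u_1$.

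Setting $v := u_1u_2$, one obtains $v^2 = u_1^2 u_2^2 = \alpha_1\alpha_2 \in F$ and $d^2 = \alpha_1+\alpha_2+2v$. Squaring the latter and using $v^2 \in F$ gives
$$(d^2)^2 - 2(\alpha_1+\alpha_2)\,d^2 + (\alpha_1-\alpha_2)^2 = 0,$$
so the minimal polynomial of $d^2$ over $F$ has degree at most $2$.

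To close the argument: $\A$ being central simple gives $\dim_F \A = n^2$ for some $n$, with $\A \otimes_F \overline{F} \cong M_n(\overline{F})$. Since centralizers commute with scalar extension, $C_{M_n(\overline{F})}(d^2 \otimes 1)$ is commutative, hence $d^2 \otimes 1$ is nonderogatory, so its minimal polynomial over $\overline{F}$ has degree exactly $n$. Because the degree of the minimal polynomial is preserved under base change (both equal $\dim_F F[d^2]$), we get $n = \deg \min_F(d^2) \leq 2$; combined with the perfect-square constraint on dimensions of central simple algebras, $\dim_F \A \in \{1,4\}$.

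The only genuinely non-routine step is the middle one: noticing that $v = u_1u_2$ itself squares to a scalar and then eliminating $v$ to obtain a quadratic for $d^2$. Everything else is standard CSA structure theory together with the equivalence ``commutative centralizer $\Leftrightarrow$ nonderogatory'' for elements of $M_n(\overline{F})$. In characteristic $2$ the analysis degenerates pleasantly: one gets $d^2 = \alpha_1+\alpha_2 \in F$ in the center, so $C_\A(d^2)=\A$, and commutativity of $\A$ forces $\A = F$ directly.
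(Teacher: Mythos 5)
Your proof is correct and follows essentially the same route as the paper: both exploit that $u_1,u_2\in C_\A(d^2)$ forces $u_1u_2=u_2u_1$ (so $d^2$ lies in a commutative quadratic extension of $F$), and both pass to $\overline F\otimes_F\A\cong M_n(\overline F)$ where a commutative centralizer constrains the Jordan structure. The only cosmetic difference is in the endgame: you invoke the minimal-polynomial/nonderogatory dictionary and the invariance of $\deg\min_F(d^2)$ under base change, whereas the paper inspects the Jordan form of $\overline w$ (which squares to a scalar) directly to exhibit a noncommuting pair in the centralizer when $n\ge 3$ --- the two formulations of the final step are equivalent.
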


\begin{proof}
Let $u,v\in \U$   be such that $d= u + v$. Squaring both sides we obtain $d^2 - (uv + vu)\in F1$. Hence, $C_\A(uv + vu)=C_\A(d^2)$ is a commutative subalgebra.
Note that $u,v\in\U$ implies  that $u,v \in C_\A(uv + vu)$, and so $uv=vu$. Therefore, $w=2uv =uv+vu\in \U$. 

Let $\bar{F}$ be the algebraic closure of $F$ and let $\bar{\A} = \bar{F}\otimes_F \A$ be the scalar extension of $\A$ to $\bar{F}$. By a standard application of Wedderburn's theorem, we may assume that 
$\bar{\A}= M_n(\bar{F})$ where $n = \sqrt{\dim_F \A}$. Let
$\bar{w}=1\otimes w$. Take $c\in C_{\bar{\A}}(\bar{w})$. Writing
$c=\sum_i \lambda_i \otimes x_i$ with the $\lambda_i$'s  linearly independent over $F$, we have $\sum_i \lambda_i \otimes [x_i,w]=0$ and hence $[x_i,w]=0$ for every $i$. This shows that
$C_{\bar{\A}}(\bar{w}) = \bar{F} \otimes_F C_\A(w) $. Since, by the first paragraph, the algebra $C_\A(w)$ is commutative, so is $C_{\bar{\A}}(\bar{w})$.

Since $w\in\U$,  the square of $\bar{w}$  is a scalar matrix. Hence, either $\bar{w}^2 =0$ or $\bar{w}$ is a diagonalizable matrix having at most two  distinct eigenvalues. In each of the cases, we see  from the Jordan normal of $\bar{w}$ that $C_{\bar{\A}}(\bar{w})$ contains a noncommuting pair of matrices   if $n\ge 3$. Therefore, $n\le 2$.
\end{proof}

\begin{remark}\label{rfd} The assumption that $\A$ is finite-dimensional is actually redundant. 
Indeed, the first paragraph of the proof shows that  for any algebra $\A$, the assumptions of the lemma imply that $\A$ contains an element $w$ such that 
$w^2\in F1$ and $C_\A(w)$ is commutative. Since $wx+xw\in C_\A(w)$ for every $x\in \A$, it follows that
$[wx+xw, yw + wy]=0$ for all $x,y\in\A$, that is, \begin{equation}\label{eqa}
(wx+xw)yw + (wxw+xw^2)y= y(w^2x+wxw) + wy(wx+xw).\end{equation}
 Assume now that $\A$ is simple and that its center equals $F 1$. Our goal is to show that $\A$ is then finite-dimensional. We may assume that $w\notin F1$. Using \cite[Theorem 7.43]{INCA}, we see that \eqref{eqa} implies that $wx+xw\in {\rm span}\, \{1,w\}$ 
for every $x\in\A$. Take $t\in\A$ such that $[t,w]\ne 0$. From
$$x[t,w] = \big(w(xt) + (xt)w\big) - (wx + xw)t,$$
along with $w(xt) + (xt)w, wx+xw\in {\rm span}\, \{1,w\}$, we see that $x[t,w]$ lies in $\mathcal T={\rm span}\, \{1,w,t,wt\}$ for every $x\in\A$. By the simplicity of $\A$, there exist
$a_i,b_i\in \A$ such that $\sum_i a_i [t,w]b_i =1$. Now, $xa_i[t,w]b_i\in \mathcal T b_i$ and so  $\A$ is equal to $\sum_i \mathcal T b_i$, which is a finite-dimensional space.
\end{remark}

 We actually need Lemma \ref{l} only in the case where $\A=M_n(F)$. However, because of the method of proof it was more natural to state it for general central simple algebras. Besides, 
 together with Remark \ref{rfd}, this lemma
 may be of independent interest. 

\bigskip

\noindent
{\em Proof of Theorem \ref{main2}}. 
 Since $n > 2$ and $F$ has characteristic $0$, we can choose distinct $\lambda_1,\dots,\lambda_n\in F$ such that $\lambda_1+\dots+\lambda_n =0$ and $\lambda_i\ne -\lambda_j$ whenever $i\ne j$.
 Let $D$ be the diagonal matrix with diagonal entries $\lambda_1,\dots,\lambda_n$. Note that $D$ has trace zero
and that $D^2$ is a diagonal matrix with diagonal entries $\lambda_1^2,\dots,\lambda_n^2$ which are all distinct. Therefore, only diagonal matrices commute with $D^2$. In particular,
the centralizer $C_\A(D^2)$ is a commutative algebra.

If the theorem was false, there would exist $A,B\in f(\A)$ such that $D=\alpha A + \beta B$ for some $\alpha,\beta \in F$. Since $f$ is $2$-central, the squares of  $\alpha A$ and $\beta B$ are scalar matrices.  Hence $n\le 2$ by Lemma \ref{l}, a contradiction.
\hfill\(\Box\)

\begin{remark} If $f$ is $2$-central for $M_n(F)$, then $n$ is even and from \cite[Corollary 3.3]{S} it follows, 
under the additional assumption that $F$ is algebraically closed, that 
every matrix in the image of the polynomial $f^3$ is a scalar multiple of  a matrix
similar to
$$
 \left[ \begin{matrix} I_{\frac{n}{2}} & 0 \cr 0 & -I_{\frac{n}{2}} \cr \end{matrix} \right] ,
$$
where $I_{\frac{n}{2}}$ is the $\frac{n}{2}\times \frac{n}{2}$ identity matrix (and so, in particular, has zero trace). This sheds some light on the context of the preceding section.
\end{remark}

\section{Related results and open problems}\label{sr}

In this section, we record some results related to Theorems \ref{main} and \ref{main2}  and state some open problems.

\subsection{The cases where $f$ is multilinear or $n$ is prime}
Our first result presents two conditions under which $f(\A)-f(\A)$
already contains all trace zero matrices. The proof uses some standard notions and results from the theory of polynomial identities.

\begin{theorem}\label{mp}
 Let $F$ be an algebraically closed field of characteristic $0$, let
$\A=M_n(F)$,  and let 
 $f\in  F\langle \mathcal X\rangle$ be a polynomial which  is neither an identity nor a central polynomial of $\A$. Suppose  either
\begin{enumerate}
\item[{\rm (a)}] $n$ is prime, or
\item[{\rm (b)}] $f$ is multilinear.
\end{enumerate}
 Then $f(\A)-f(\A)$ contains all  trace zero matrices in $\A$.
\end{theorem}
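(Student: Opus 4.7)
My plan is to parallel the proof of Theorem~\ref{main} while replacing \cite[Theorem 3.8]{B} by the strictly stronger assertion that $f(\A)$ contains a matrix $B$ whose $n$ eigenvalues are pairwise distinct. Once such a $B$ is available, the proof collapses to a single application of Lemma~\ref{vohka}: after conjugating $B$ to $\diag(\lambda_1,\dots,\lambda_n)$, that lemma (with $n$ blocks of size~$1$) gives that $B+U$ is similar to $B$ for every strictly upper triangular $U$ and $B-L$ is similar to $B$ for every strictly lower triangular $L$. The difference $(B+U)-(B-L)=U+L$ then realises an arbitrary zero-diagonal matrix as $B'-B''$ with $B', B''$ conjugate to $B$. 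Combined with \cite[Problem 3 on p.\,77]{HJ} (every trace zero matrix is similar to a zero-diagonal one) and Remark~\ref{conj} (the set $f(\A)$ is conjugation-invariant), this yields $f(\A)-f(\A)\supseteq\{\text{trace zero matrices}\}$.

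The proof therefore reduces, under either hypothesis, to producing $B\in f(\A)$ with simple spectrum. For $n\in\{2,3\}$ the bound ``multiplicity $\le n/2$'' from \cite[Theorem 3.8]{B} is already strict and nothing further is needed. For case~(b), I would appeal to the standard PI-theoretic fact that for a multilinear $f$ which is neither an identity nor central, $\linspan f(\A)$ is an $\mathrm{Ad}$-invariant subspace of $M_n(F)$ and hence, by Burnside's theorem, equals $\mathfrak{sl}_n(F)$ or $M_n(F)$. Since the polynomial map $f\colon\A^m\to\A$ is a morphism of affine varieties whose image spans a subspace meeting the Zariski-open dense set of matrices with simple spectrum, a density argument should produce an evaluation of $f$ landing in that dense open set.

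For case~(a) with $n\ge 5$ prime, the plan is to rule out the pathological possibility that every element of $f(\A)$ has a repeated eigenvalue. If this occurred, the discriminant of the characteristic polynomial of $f(X_1,\dots,X_m)$—a trace polynomial in the entries of the argument matrices through $f$—would vanish identically on $\A^m$, yielding a nontrivial trace identity. Combined with the primality of $n$, which sharply restricts the $k$-central behaviour available on $M_n(F)$ (as in the discussion around Theorem~\ref{main2} and \cite[Proposition 2]{BMR}), this identity should force $f$ to be either an identity or central, contradicting the hypothesis.

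The main obstacle, in both cases, is precisely this upgrade of the multiplicity bound from $\le n/2$ to $=1$. In the multilinear setting it is a mild but nontrivial strengthening of well-known density arguments, while in the prime case with $n\ge 5$ it requires a more delicate analysis of when an $\mathrm{Ad}$-invariant, scalar-closed image can avoid matrices with simple spectrum. Once this refinement is in place, the geometric part of the argument is a one-block specialisation of Theorem~\ref{taglavn} and is essentially mechanical.
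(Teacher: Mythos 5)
Your overall reduction is the right one and matches the paper's: produce $B\in f(\A)$ with $n$ distinct eigenvalues; conjugate $B$ to $\diag(\lambda_1,\dots,\lambda_n)$; observe that every upper triangular and every lower triangular matrix with that diagonal is similar to $B$, so $f(\A)-f(\A)\supseteq M_n^0(F)$; finish with the fact that every trace zero matrix is similar to a zero-diagonal matrix. This geometric tail is exactly what the paper does (it does not even need Lemma~\ref{vohka} or Theorem~\ref{taglavn}; it just uses the triangular similarity directly).

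The gap is that neither of your two arguments for the existence of $B$ is actually a proof, and you acknowledge this; the existence claim is the whole content of the theorem. For case~(b) the paper simply cites \cite[Theorem 1]{BMR}, which is precisely the statement you are trying to rederive. Your sketch via Burnside plus Zariski density does not close: knowing that $\linspan f(\A)$ is $\mathfrak{sl}_n$ or $M_n$ and meets the dense open set of simple-spectrum matrices says nothing a priori about $f(\A)$ itself, since $f(\A)$ is only a cone, not a subspace, and an irreducible constructible set can span the whole space while living entirely inside the discriminant hypersurface. For case~(a) the paper's argument is a clean and complete one through the universal division algebra $\mathrm{UD}_n(F)$: writing $\overline f$ for the image of $f$ in the generic matrix algebra, the subfield $Z(\overline f)$ of $\mathrm{UD}_n(F)$ has $[Z(\overline f):Z]$ dividing $n$; by primality and non-centrality this degree is $n$, so the characteristic polynomial of $\overline f$ is its (irreducible, separable) minimal polynomial, its discriminant is a nonzero element of $Z$, and a specialization with nonzero discriminant yields a matrix in $f(\A)$ with simple spectrum. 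Your proposal to deduce a trace identity from vanishing discriminant and then ``combine with primality'' names the destination without supplying the path; the degree argument in $\mathrm{UD}_n(F)$ is precisely the step your plan is missing. As written, the proposal is a correct reduction to a statement you have not established.
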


\begin{proof}  Assume that $n$ is prime. Denote by
 ${\rm GM}_n(F)$ the algebra of $n\times n$ generic matrices, and  by ${\rm UD}_n(F)$ the universal division algebra of degree $n$ (i.e., the algebra of central quotients of ${\rm GM}_n(F)$).
Further, let $\overline{f}$ denote the image of the natural epimorphism from $ F\langle \mathcal X\rangle$ onto  ${\rm GM}_n(F)$. Our assumptions on $f$ imply that  $\overline{f}$ does not belong to the
 center  of 
${\rm GM}_n(F)$, and hence neither to the center
$Z$ of ${\rm UD}_n(F)$. Thus, $Z\subsetneq Z(\overline{f})\subsetneq {\rm UD}_n(F)$.
Since $n$ is prime and $[{\rm UD}_n(F):Z]=n^2$, it follows that $[Z(\overline{f}):Z]=n$. This implies that the characteristic polynomial $h$ of $\overline{f}$ is the minimal polynomial, and is hence irreducible over $Z$. As we work in characteristic $0$, $h$ has $n$ distinct roots in the
algebraic closure of $Z$, so its discriminant Disc$(h)$ is a nonzero element of $Z$. Therefore,   Disc$(h)(A_1,\dots,A_m)\ne 0$ for some matrices $A_i\in\A$.
Evaluating the coefficients of $h$ at this $m$-tuple $(A_1,\dots, A_m)$, we thus obtain a polynomial  with $n$ distinct roots  which is the characteristic polynomial of the matrix 
$f(A_1,\dots,A_m)$. Thus, $f(A_1,\dots,A_m)$ has $n$ distinct eigenvalues.

If $f$ is multilinear, the same conclusion that $f(\A)$ contains a matrix  with $n$ distinct eigenvalues follows from \cite[Theorem 1]{BMR}.

As $f(\A)$ is closed under conjugation by invertible matrices (Remark \ref{conj}), it follows that, in each of the two cases (a) and (b), $f(\A)$ contains a diagonal matrix
$D$ with distinct eigenvalues $\lambda_1,\dots,\lambda_n$ on the diagonal. Every upper triangular matrix with $\lambda_1,\dots,\lambda_n$ on the diagonal is similar to $D$ and is 
 therefore also contained in $f(\A)$, and the same is true for every lower triangular matrix with $\lambda_1,\dots,\lambda_n$ on the diagonal. Consequently, $f(\A)-f(\A)$ contains 
$M_{n}^0 (F)$,
the set of all  matrices in $\A$ all of whose diagonal entries are  zero. The desired conclusion therefore follows from the fact  (already used at the 
beginning of the proof of Theorem \ref{taglavn}) that every trace zero matrix is similar to a matrix in $M_{n}^0 (F)$.
\end{proof}

The argument given in the first paragraph of the proof was shown to us by Jurij Vol\v ci\v c, and the argument in the last paragraph was observed by Daniel Vitas.
We thank both for   allowing us to use their findings in this paper.

The L'vov-Kaplansky conjecture states that the image of a multilinear polynomial $f$ in $\A=M_n(F)$, where $F$ is an infinite field, is either $\{0\}$, the set of scalar matrices, the set
of trace zero matrices, or the whole set $\A$. Thus, if $f$ is neither an identity nor a central polynomial, then $f(\A)$ itself, not only $f(\A)-f(\A)$, supposedly contains all trace zero matrices. 
However, although Theorem \ref{mp} is far from the L'vov-Kaplansky conjecture (and, besides, the proof was an easy application of  \cite[Theorem 1]{BMR}), it may still be of interest since this conjecture is largely open. See the survey paper \cite{BMRs} by the main contributors in this area of research.

By \cite[Theorem 3.2 and Proposition 4.1]{BGKP}, the assertion that $f(\A)-f(\A)$ contains all  trace zero matrices in $\A =M_n(F)$ also holds for Lie polynomials $f$ satisfying certain conditions.
We are thankful to Urban Jezernik for drawing our attention to this result.

It thus may be said that for most polynomials $f$, the set
$f(\A)-f(\A)$
already includes all trace zero matrices. The situation occurring in Theorem \ref{main2} is quite exceptional.

\subsection{The Waring problem for $\mathcal B(H)$} The Waring problem, discussed above for the algebra $M_n(F)$, also makes sense for some infinite-dimensional algebras. In fact, as explained in \cite{B},
the motivation for its consideration  partially arose from the results on  presenting operators in $\mathcal B(H)$, the algebra of all bounded linear operator on an  infinite-dimensional separable (complex) Hilbert space $H$, as sums
of some  special-type operators. 

It was shown in \cite[Corollary 3.17]{B} that for any  nonconstant polynomial  $f\in \mathbb C\langle \mathcal X\rangle$, every operator in
 $\A=\mathcal B(H)$ is 
a sum of $3916$ operators from $f(\A)-f(\A)$. We will now prove that the number of summands can be significantly lowered by using the result by Davidson and Marcoux \cite[Theorem 2.15]{DM} (of which the first author was unaware at the time of publication of \cite{B}). This result states that if $B\in \mathcal B(H)$ is not a sum of a scalar operator and a compact operator, then every operator in $\mathcal B(H)$ can be written as  as a sum of at most eight operators similar to $B$.

\begin{theorem}\label{bh} Let $H$ be an infinite-dimensional separable Hilbert space and
let  $f\in \mathbb C\langle \mathcal X\rangle$ be a  nonconstant polynomial. Then every operator in
 $\A=\mathcal B(H)$ is 
a sum of eight or fewer operators from $f(\A)$. 
\end{theorem}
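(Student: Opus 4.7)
The plan is to reduce everything to the Davidson--Marcoux theorem \cite[Theorem 2.15]{DM}: once I produce a single operator $B\in f(\A)$ that is not of the form $\lambda I + K$ with $\lambda\in\C$ and $K$ compact, their result expresses every $T\in\mathcal B(H)$ as a sum of at most eight operators similar to $B$, and Remark \ref{conj} guarantees that all such conjugates still lie in $f(\A)$. Thus the whole task reduces to exhibiting such a $B$.

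To exhibit $B$, I would first choose an integer $n$ large enough that $f$ is neither an identity nor a central polynomial of $M_n(\C)$. This is available from standard PI-theoretic facts: by Amitsur--Levitzki the minimal degree of an identity of $M_n(\C)$ is $2n$, and the minimal degree of a central polynomial also grows with $n$, so any fixed nonconstant $f$ of degree $d$ fails both conditions once $n$ is large enough in terms of $d$. For such $n$ I can select matrices $X_1,\dots,X_m\in M_n(\C)$ with $f(X_1,\dots,X_m)$ not a scalar matrix.

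Next I would lift this configuration to $\mathcal B(H)$ by a block-diagonal construction. Identify $H$ with $\bigoplus_{k=1}^{\infty}\C^n$ and set $Y_i := X_i\oplus X_i\oplus\cdots \in\mathcal B(H)$. Since polynomial evaluation commutes with countable block-diagonal direct sums,
$$B := f(Y_1,\dots,Y_m) = f(X_1,\dots,X_m)\oplus f(X_1,\dots,X_m)\oplus\cdots.$$
If $B$ were equal to $\lambda I + K$ with $K$ compact, then the $k$-th diagonal block $K_k$ of $K$ would satisfy $f(X_1,\dots,X_m) = \lambda I_n + K_k$; but the diagonal blocks of a compact operator must tend to $0$ in norm, which forces $f(X_1,\dots,X_m) = \lambda I_n$, contradicting the choice of the $X_i$. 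Hence $B$ is not a sum of a scalar and a compact operator, and the three ingredients assembled in the first paragraph complete the proof.

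The main obstacle I foresee is the first step: certifying that a nonconstant $f$ is eventually neither an identity nor a central polynomial of $M_n(\C)$. This is the only place where the hypothesis on $f$ enters in an essential way; after it, the block-diagonal construction cleanly converts ``nonscalar in $M_n$'' into ``not scalar $+$ compact in $\mathcal B(H)$,'' and Davidson--Marcoux together with Remark \ref{conj} do the remaining work.
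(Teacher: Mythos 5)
Your argument is correct and shares the paper's overall strategy: embed a copy of $M_n(\C)$ into $\mathcal B(H)$, find $B\in f(\A)$ that is not of the form $\lambda I+K$ with $K$ compact, and invoke the Davidson--Marcoux theorem together with Remark \ref{conj}. Where you diverge is in justifying the existence of such a $B$. The paper cites \cite[Theorem 3.8]{B} for a matrix in $f(M_n(\C))$ whose eigenvalues all have algebraic multiplicity at most $\frac{n}{2}$, and then treats it as evident that the corresponding operator in $\mathcal M_n\subset\mathcal B(H)$ is not scalar-plus-compact. You need only the much weaker fact that $f$ takes a nonscalar value $C\in M_n(\C)$ --- available for large $n$ because identities and central polynomials of $M_n(\C)$ have degrees bounded below by quantities growing with $n$ --- and you verify the non-compactness claim from scratch: the block-diagonal compressions of a compact operator with uniformly bounded block sizes tend to zero in norm, so $C\oplus C\oplus\cdots=\lambda I+K$ with $K$ compact forces $C=\lambda I_n$. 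The embeddings in the two proofs are literally the same operator up to the isomorphism $H\cong\C^n\otimes H_0$ (your $C\oplus C\oplus\cdots$ is the paper's $C\otimes 1$), so this is not a different route so much as a genuine simplification: it dispenses with \cite[Theorem 3.8]{B} and makes explicit a step the paper leaves as a remark.
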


\begin{proof}
Take
 $n\ge 2$ such that $f$  is neither an identity nor a central polynomial of $M_n(\C)$. We may identify  $\A$ with $M_n(\A)$ and accordingly  consider a subalgebra $\mathcal M_n$ of $\A$ isomorphic 
to $M_n(\C)$ (with matrix units being idempotent and square-zero operators  having infinite dimensional kernel and range).  By \cite[Theorem 3.8]{B}, $f(\mathcal M_n)$ contains a matrix $B$ such that 
all of its eigenvalues
have  algebraic multiplicity  $\le \frac{n}{2}$. Note that $B$, viewed as an operator in $\mathcal B(H)$, is not a sum of a scalar operator and a compact operator. By Remark \ref{conj},
$f(\A)$ is closed under conjugation by invertible operators, and so the desired result follows from \cite[Theorem 2.15]{DM}.
\end{proof}

Most probably, this theorem is not  definitive.

\begin{question} 
What is the smallest number  that can replace ``eight'' in Theorem \ref{bh}?
\end{question} 

We remark that by considering linear combinations instead of sums, we can use \cite[Theorem 2.13]{DM} to replace ``eight'' by ``six''. However, it is hard to believe that this is the smallest number.
What is clear is that ``one'' is not enough since the identity operator $I$ is not a commutator (= the element in the image of the polynomial $[X_1,X_2]$ in $\mathcal B(H)$). On the other hand, the classical Halmos' theorem \cite{H} states  that every $T\in \mathcal B(H)$ is a sum of two commutators.  The most optimistic conjecture would thus be that ``eight'' can be replaced by ``two''.

\subsection{Problems on linear combinations} So far, we have considered  sums and differences of elements in $f(\A)$. This may be more useful for possible applications than  linear combinations of these elements (see \cite[Corollary 3.23]{B} as a sample application). Nevertheless, it is natural to ask whether the above results can be improved if we omit the condition that the 
coefficients of linear combinations may only  be $1$ and $-1$.

Theorems \ref{main} and \ref{main2} imply that if $f$
 is neither an identity nor a central polynomial of $\A=M_n(\C)$, then every trace zero matrix  is a linear combination of four, but not necessarily two, elements of $f(\A)$.  
The following question obviously occurs. 

\begin{question} If $f$
 is neither an identity nor a central polynomial of $\A=M_n(\C)$, is then every trace zero matrix in $\A$ a linear combination of three elements from $f(\A)$?
\end{question}

Polynomials $f,g\in F\langle \mathcal X\rangle$ are said to be {\em cyclically equivalent} if $f-g$ is a sum of commutators in 
$F\langle \mathcal X\rangle$. From \cite[Theorem 4.5]{BK} it follows that if a polynomial $f$ is not cyclically equivalent to an identity of $\A=M_n(F)$ (where $F$ is any field of characteristic $0$) and is not central 
for $\A$,
then the linear span of $f(\A)$ is equal to $\A$. 

Theorem \ref{main} yields the following corollary, which slightly improves \cite[Corollary 3.20]{B}.

\begin{corollary}\label{cs2}If a polynomial $f\in F\langle \mathcal X\rangle$ is  
 not cyclically equivalent to an identity of $\A=M_n(\C)$
and is
not  central for $\A$, then every matrix in $\A$ is a linear combination of five matrices from $f(\A)$.
\end{corollary}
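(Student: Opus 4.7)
The plan is to reduce the statement to Theorem~\ref{main} by separately taking care of the trace. Theorem~\ref{main} gives, for every trace zero matrix $A_0 \in \A$, a representation
$$A_0 = B_1 - B_2 + B_3 - B_4, \quad B_1,B_2,B_3,B_4 \in f(\A),$$
which is already a linear combination of four elements of $f(\A)$. To obtain the statement for an arbitrary matrix $A$, it therefore suffices to subtract from $A$ a single suitable scalar multiple of a fixed element of $f(\A)$ so as to reach the trace zero case; this accounts for the fifth term in the linear combination.

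The key input is that under the standing hypotheses, $f(\A)$ is not contained in the hyperplane $\{X\in\A \,:\, \tr(X)=0\}$. Indeed, by \cite[Theorem 4.5]{BK} (recalled just before the corollary), the assumption that $f$ is neither cyclically equivalent to an identity nor central for $\A$ implies that $\linspan\, f(\A) = \A$. Since $\A$ contains matrices of nonzero trace, $f(\A)$ must contain some element $C$ with $\tr(C)\neq 0$.

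Now take an arbitrary $A\in\A$ and set
$$\alpha = \frac{\tr(A)}{\tr(C)} \in \C.$$
Then $\tr(A - \alpha C)=0$, so by Theorem~\ref{main} there exist $B_1,B_2,B_3,B_4 \in f(\A)$ with
$$A - \alpha C = B_1 - B_2 + B_3 - B_4.$$
Rearranging, $A = B_1 - B_2 + B_3 - B_4 + \alpha C$ is a linear combination of the five elements $B_1,B_2,B_3,B_4,C \in f(\A)$, which is the desired conclusion.

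There is no real obstacle here: the argument is a two-line combination of Theorem~\ref{main} with the cyclic-equivalence/linear-span result of \cite{BK}. The only point worth checking is that the hypothesis ``not cyclically equivalent to an identity'' is precisely what is needed to produce an element of $f(\A)$ of nonzero trace, which is the one element outside the trace zero hyperplane that allows Theorem~\ref{main} to be applied after a single scalar correction.
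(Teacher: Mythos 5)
Your proof is correct and takes essentially the same approach as the paper: both invoke the result recalled before the corollary (from \cite[Theorem 4.5]{BK}) that $\linspan f(\A) = \A$, extract a matrix $C\in f(\A)$ of nonzero trace, subtract the scalar multiple $\frac{\tr(A)}{\tr(C)}C$ to reach the trace zero case, and then apply Theorem~\ref{main}. The two arguments are the same up to notation.
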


\begin{proof} By the result mentioned before the statement of the corollary, $f(\A)$ contains a matrix $A$ with nonzero trace. Writing $T\in \A$ as
$$T= \frac{{\rm tr}(T)}{{\rm tr}(A)}A + \Bigl(T - \frac{{\rm tr}T)}{{\rm tr}(A)}A\Bigr)$$
and applying  Theorem \ref{main} to the trace zero matrix $T - \frac{{\rm tr}(T)}{{\rm tr}(A)}A$ we obtain the desired conclusion.
\end{proof}

 Theorem \ref{main2} tells us that ``five'' cannot be replaced by ``two''. However, the following question remains open.

\begin{question} Can ``five'' in Corollary \ref{cs2} be replaced by ``four'' or even ``three''?
\end{question}

\subsection{Problem on fields and other simple algebras} 
In \cite{C}, Chuang proved that if  
 $F$ is a finite field, then a subset $\mathcal S$ of $M_n(F)$  is the image of a polynomial with zero constant term  if and only if $\mathcal S$ contains $0$ and $T\mathcal S T^{-1}\subseteq \mathcal S$ for every invertible 
$T\in M_n(F)$. So, for example, the set of all matrices of rank at most one is the image of some polynomial.  Therefore, Theorem \ref{main} does not hold if the field of complex numbers  is replaced by a finite field. The following question, however, may be asked.

\begin{question} Does Theorem \ref{main} remain valid
if the   field of complex numbers  is replaced by any infinite field $F$? Moreover, if $F$ is not algebraically closed, does Theorem \ref{main} hold for finite-dimensional  simple $F$-algebras $\A$  other than $M_n(F)$? (In this setting, the role of a trace zero matrix is played by a commutator of two elements in $A$.)
\end{question}


\subsection{The Waring problem for matrices of large sizes} If $f$ is a central polynomial for $M_n(F)$, then
its degree  is greater than  $n$. 
 The following question thus remains unanswered.

\begin{question} Let $f$ be a nonconstant polynomial.  Does $f(\A)-f(\A)$ contain all trace zero matrices 
in $\A=M_n(F)$ for all sufficiently large $n$?
\end{question}

A positive answer would be analogous to the solution of the Waring problem for groups \cite{LST}.


\end{document}